\newtheorem{theorem}{Theorem}
\newtheorem{lemma}[theorem]{Lemma}
\newtheorem{remark}[theorem]{Remark}
\newenvironment{proof}[1][Proof]{\noindent\textbf{#1.} }{\ \rule{0.5em}{0.5em}}
\begin{document}

\title{Convex subsets of non-convex Lorentz balls}
\author{Daniel J. Fresen\thanks{%
University of Pretoria, Department of Mathematics and Applied Mathematics,
daniel.fresen@up.ac.za MSC: 52A27, 52A30, 60E05, 60E15.}}
\maketitle

\begin{abstract}
Many star bodies have convex subsets with approximately the same Gaussian measure (of the complement). Inspired by this phenomenon, and in connection with the randomized Dvoretzky theorem for Lorentz spaces, we derive bounds on the distribution of certain functions of a Gaussian random vector by approximating their sub-level sets by convex subsets.
\end{abstract}

\section{Introduction}

\noindent \underline{Our starting point}

\bigskip

Let $E\subset\mathbb{R}^n$ be a star convex Borel set, i.e. for all $x\in E$ and $\lambda\in [0,1]$, $\lambda x\in E$. For many such $E$ there exists a convex set $K\subseteq E$ such that
\begin{equation}
\gamma_n\left(\mathbb{R}^n\setminus K\right)\leq C\gamma_n\left(\mathbb{R}^n\setminus C^{-1}E\right)\label{conv app}
\end{equation}
where $\gamma_n$ is the standard Gaussian measure on $\mathbb{R}^n$ and $C>1$ is a universal constant. Several elementary observations about this type of approximation are contained in \cite{Fr}, and we refer the reader to Talagrand's papers \cite{Tal1, Tal2} for a more in-depth discussion of the general topic of approximating star convex sets by convex sets, including various conjectures.

Eq. (\ref{conv app}) expresses a sense in which the bound
\begin{equation}
\gamma_n\left(\mathbb{R}^n\setminus E\right)\leq\gamma_n\left(\mathbb{R}^n\setminus K\right)\label{setincbound}
\end{equation}
is sharp and gives hope that the Gaussian measure of a star body can be adequately estimated by considering convex subsets. An advantage of this is that one can use the Gaussian concentration inequality to bound the Gaussian measure of a convex body, since its Minkowski functional is Lipschitz and there is a convenient expression for its Lipschitz constant (as its supremum over the sphere).

\newpage

\noindent \underline{Examples of (\ref{conv app}) from \cite{Fr}}

\bigskip

(\ref{conv app}) holds if $E$ is of the form $\lambda B_F$ where $F:[0,\infty)\rightarrow[0,\infty)$ is concave and strictly increasing with $F(0)=0$ and $F(1)=1$, $\lambda>1$, and
\[
B_F=\left\{x\in\mathbb{R}^n:\sum_{i=1}^nF\left(\left\vert x_i\right\vert \right)\leq n\right\}.
\]
It also holds with $E=\lambda A$ and, say $C=1.01$, provided that $A$ is a bounded star convex set with $0\in\textrm{int}(A)$ whose Minkowski functional is continuous, and $\lambda$ is sufficiently large (depending on $A$). In both cases we may take $K$ to be a Euclidean ball centred at the origin.

\bigskip

\underline{The goal of this paper}

\bigskip

Inspired by these observations we bound $\gamma_n\left(\mathbb{R}^n\setminus E\right)$ by finding a convex subset $K\subseteq E$ and using (\ref{setincbound}). This is implemented in the case where $E$ is a possibly non-convex Lorentz ball.

\bigskip

\underline{Motivation and relationship to \cite{FrL}}

\bigskip

While the topic of approximating star bodies with convex subsets is of independent interest, our interest stems from its connection to the concentration of measure phenomenon as outlined in \cite{Fr} and its application to the randomized Dvoretzky theorem for Lorentz spaces in \cite{FrL}. If $1\leq p <\infty$ and $\omega=(\omega_i)_1^n$ is a non-increasing sequence in $[0,1]$ with $\omega_1=1$, then the corresponding Lorentz norm is given by
\[
\left\vert x \right\vert _{\omega,p}=\left(\sum_{i=1}^{n}\omega_ix_{[i]}^p\right)^{1/p}
\]
where $\left(x_{[i]}\right)_1^n$ is the non-increasing rearrangement of $\left(\left\vert x_i\right\vert\right)_1^n$. If $\varepsilon \in (0,1)$ and $k\leq d(p,\varepsilon,\omega)$, where $d(p,\varepsilon,\omega)$ is a parameter that can be given precisely, and $G$ is an $n\times k$ random matrix with i.i.d. $N(0,1)$ entries, then the following event occurs with high probability: for all $x\in\mathbb{R}^k$, 
\[
(1-\varepsilon)M\left\vert x \right\vert \leq \left\vert Gx \right\vert_{\omega,p} \leq (1+\varepsilon)M\left\vert x \right\vert
\]
where $M=\mathbb{E}\left\Vert Ge_1 \right\Vert$. In a series of two paper, this one and \cite{FrL}, we provide lower bounds for $d(p,\varepsilon,\omega)$ that extend results of Paouris, Valettas and Zinn \cite{PVZ} which apply when $\omega$ is a constant sequence, in which case the corresponding Lorentz space is $\ell_p^n$ (our results also improve the dependence on $p$ in this special case). A major difference between our work and \cite{PVZ} is that the Lorentz norm of a Gaussian random vector is typically not written in terms of the sum of i.i.d. random variables.

Of special interest is the case where $\omega_i=i^{-r}$, $0\leq r<\infty$, and the corresponding Lorentz norm is then
\[
\left\vert x \right\vert _{r,p}=\left(\sum_{i=1}^{n}i^{-r}x_{[i]}^p\right)^{1/p}
\]

Setting $X=Gx$ and assuming that $\left\vert x \right\vert =1$ where $\left\vert \cdot \right\vert$ is the Euclidean norm, we see that $X$ has the standard normal distribution in $\mathbb{R}^n$. The critical step in the proof of randomized Dvoretzky theorems, following Milman \cite{Mil} and later work by Gordon \cite{Gord}, Pisier \cite{Pis} and Schechtman \cite{Sch}, is usually to derive concentration estimates for the norm. Unlike the classical approaches, we prove concentration estimates directly for $\left\vert X \right\vert _{r,p}^p=\sum_{i=1}^{n}i^{-r}X_{[i]}^p$ (i.e. to a function of the norm rather than the norm itself). This difference is not insignificant, and we refer the reader to \cite{FrL} for more details of its effect; the main effect being that points where the local Lipschitz constant is large have been moved away from the origin so that the methodology outlined in \cite{Fr} can be applied (this methodology builds upon methods from \cite{ASY, Adam15, BNT, Gra, MeSz, Vu}).

Part of this process involves estimating the distribution of $\left\vert\nabla \psi(X)\right\vert$, where $\psi(x)=\sum_{i=1}^{n}i^{-r}x_{[i]}^p$. This comes down to estimating the distribution of
\begin{equation}
\sum_{i=1}^{n}i^{-2r}X_{[i]}^{2(p-1)}\label{quanttt}
\end{equation}
For $p\geq 3/2$ the exponent of $X_{[i]}$ is at least 1 and the quantity in (\ref{quanttt}) can be written in terms of a norm of $X$, and its distribution estimated using Gaussian concentration. For $1< p<3/2$ we run into challenges not only in terms of estimating the distribution of $\left\vert\nabla \psi(X)\right\vert$, but also in terms of a quasi-norm constant that blows up as $p\rightarrow 1$. This blowup is a technical matter related to the epsilon net argument as used in \cite{FrL}.

\bigskip

\underline{More on this paper}

\bigskip

Our main result, Theorem \ref{orderorderbound}, presents estimates of the form
\[
\mathbb{P}\left\{\left\vert X \right \vert_{\sharp}\leq S\right\}\geq 1-C\exp\left(-t^2/2\right) \hspace{1cm}\textit{and} \hspace{1cm} \left\vert X \right \vert_{\sharp}\leq S \Rightarrow \sum_{i=1}^{n}i^{-2r}X_{[i]}^{2(p-1)}\leq R
\]
where $\left\vert \cdot \right \vert_{\sharp}$ is a norm and $R$, $S$ and $\left\vert \cdot \right \vert_{\sharp}$ depend on $t$. The logical implication on the right means that a sub-level set of $x\mapsto\sum_{i=1}^{n}i^{-2r}x_{[i]}^{2(p-1)}$ contains a (convex) sub-level set of $x\mapsto\left\vert x \right \vert_{\sharp}$. This implies (obviously) that
\[
\mathbb{P}\left\{\sum_{i=1}^{n}i^{-2r}X_{[i]}^{2(p-1)}\leq R\right\}\geq 1-C\exp\left(-t^2/2\right)
\]
but gives more information that allows us, in \cite{FrL}, to avoid the blow-up of the quasi-norm constant mentioned above. Our results are sharp for $t=C$ and as $t\rightarrow\infty$ in a sense made clear in Remark \ref{shar simpy}.

The most interesting case is, unsurprisingly, $1<p<3/2$. The line segment $p=2(1-r)$ (for $1/4<r<1/2$) is of special interest, and it appears that this reflects the underlying geometry (and not only the methods used).

\section{Notation and once-off explanations}

The symbols $C$ and $c$ denote positive universal constants that may take on
different values at each appearance. $\mathbb{M}$ and $\mathbb{E}$ denote median and expected value. $n$ and $k$ will typically denote natural numbers and this will not always be stated explicitly but should be clear from the context. $\textrm{Lip}(f)\in \left[0,\infty\right]$ denotes the Lipschitz constant of any function $f:\mathbb{R}^n \rightarrow \mathbb{R}$ with respect to the Euclidean norm. $\sum_{i=a}^bf(i)$ denotes summation over all $i\in\mathbb{N}$ such that $a\leq i \leq b$, regardless of whether $a,b\in\mathbb{N}$. $1_{\{\cdot\}}$ denotes the indicator function of a set or condition. When proving a probability bound of the form $C\exp\left(-ct^2\right)$, we may take $C$ sufficiently large and assume in the proof that, say, $t\geq1$, because for $t<1$ the resulting probability bound is greater than $1$ and the result holds trivially. After such a bound is proved we may replace $C$ with $2$ using the fact that there exists $c'>0$ such that 
\[
\min\left\{1,C\exp\left(-ct^2\right)\right\}\leq 2\exp\left(-c't^2\right)
\]
Lastly, by making an all-round change of variables we may take $c'$ to be, say, $1$ or $1/2$. The constants in the final probability bound will therefore (without further explanation) not always match what appears to come from the proof.

\section{\label{Lo estimating}Lemmas}
We start with basic estimates for the lower incomplete gamma function suited to our purposes.
\begin{lemma}
\label{incomplete gamma}For all $b,q\in \left[ 0,\infty \right)$,
\begin{eqnarray*}
c^{1+q}\min\left\{1+q,b\right\}^{1+q}&\leq& \int_0^b e^{-\omega}\omega^qd\omega\leq C^{1+q}\min\left\{1+q,b\right\}^{1+q}\\
\frac{ce^bb^{1+q}}{1+q+b}&\leq& \int_0^b e^{\omega}\omega^qd\omega \leq \frac{Ce^bb^{1+q}}{1+q+b}
\end{eqnarray*}
\end{lemma}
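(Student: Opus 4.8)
The plan is to establish the two two-sided estimates separately, and in each case to split the analysis at the natural balance point of the integrand. The only tool is the trivial principle that for a nonnegative integrand one obtains a lower bound by restricting to a sub-interval comparable in length to the whole one and replacing each factor ($e^{\mp\omega}$ and $\omega^q$) by its infimum there, and an upper bound by suitable global comparisons. The case $b=0$ is trivial throughout, and no probability is involved, so all constants come out universal.

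For $\int_0^b e^{-\omega}\omega^q\,d\omega$: since $e^{-\omega}\le 1$ one has $\int_0^b e^{-\omega}\omega^q\,d\omega\le\int_0^b\omega^q\,d\omega=b^{1+q}/(1+q)\le b^{1+q}$, which is the desired upper bound (with constant $1$) in the range $b\le 1+q$, where $\min\{1+q,b\}=b$; and since $\int_0^b\le\int_0^\infty=\Gamma(1+q)$, the range $b>1+q$ reduces to the elementary inequality $\Gamma(1+q)\le 2(1+q)^{1+q}$, which I would obtain from $\Gamma(1+q)=\int_0^\infty(\omega^q e^{-\omega/2})\,e^{-\omega/2}\,d\omega\le(\sup_{\omega\ge 0}\omega^q e^{-\omega/2})\int_0^\infty e^{-\omega/2}\,d\omega=2(2q/e)^q\le 2(1+q)^{1+q}$. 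For the lower bound set $m=\min\{1+q,b\}$ and integrate over $[m/2,m]\subseteq[0,b]$; there $e^{-\omega}\ge e^{-m}\ge e^{-(1+q)}$ and $\omega^q\ge(m/2)^q$, so the integral is at least $\frac{m}{2}\,e^{-(1+q)}(m/2)^q=(m/2e)^{1+q}$.

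For $\int_0^b e^{\omega}\omega^q\,d\omega$ (assume $b>0$) the first move is the substitution $s=b-\omega$, which gives the exact identity $\int_0^b e^{\omega}\omega^q\,d\omega=e^b b^q\int_0^b e^{-s}(1-s/b)^q\,ds$, reducing everything to $\int_0^b e^{-s}(1-s/b)^q\,ds\asymp b/(1+q+b)$. For the upper bound I use two competing estimates: from $(1-s/b)^q\le e^{-qs/b}$ one gets $\int_0^b e^{-s}(1-s/b)^q\,ds\le\int_0^\infty e^{-s(1+q/b)}\,ds=b/(b+q)$, which is $\le 2b/(1+q+b)$ as soon as $b+q\ge 1$; and from $e^{-s}(1-s/b)^q\le 1$ the same integral is $\le b$, which is $\le 2b/(1+q+b)$ when $b+q<1$ (since then $1+q+b<2$). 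For the lower bound I integrate over $[b-\delta,b]$ with $\delta=b/(1+q+b)\le 1$: there $e^{\omega}\ge e^{b}e^{-1}$ and $\omega^q=b^q(1-\frac{1}{1+q+b})^q\ge b^q e^{-1}$, the last step from $\log(1-x)\ge -x/(1-x)$ with $x=\frac{1}{1+q+b}$, which makes $q\log(1-x)\ge -q/(q+b)\ge -1$; multiplying by the length $\delta$ gives $\int_0^b e^{\omega}\omega^q\,d\omega\ge e^{-2}\,e^b b^{1+q}/(1+q+b)$.

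The point that needs a little care --- and the reason the denominators carry the full $1+q+b$ rather than just $b+q$ or $1+q$ --- is the regime in which $b$ (equivalently $b+q$) is small: there the Laplace-type tail bound degenerates, and one must fall back on the crude estimate coming from $e^{\pm\omega}$ being within a constant factor of $1$ on all of $[0,b]$. Organizing this case split (together with the parallel split at $b=1+q$ in the first estimate) is the main obstacle, but it is pure bookkeeping once the elementary one-directional bounds above are in hand.
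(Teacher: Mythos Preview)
Your argument is correct and close in spirit to the paper's, but the organization is different enough to note. For the first estimate the paper splits into four sub-cases for each direction, using the unimodality of $e^{-\omega}\omega^q$ and rectangle comparisons; your lower bound via the single interval $[m/2,m]$ with $m=\min\{1+q,b\}$ handles all cases at once and is cleaner. For the second estimate the paper works directly with $\exp(\omega+q\ln\omega)$ and the tangent/secant bounds $\ln\omega\le(\omega-b)/b+\ln b$ and $\ln\omega\ge 2(\omega-b)/b+\ln b$ on $[b/2,b]$, then appeals to $(e^z-1)/z\asymp e^z/(1+z)$; your substitution $s=b-\omega$ followed by $(1-s/b)^q\le e^{-qs/b}$ is exactly the tangent bound in disguise, while your lower bound over $[b-\delta,b]$ with $\delta=b/(1+q+b)$ replaces the secant-line step by a direct choice of sub-interval. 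The paper's route makes the connection to the Laplace-type asymptotic $(e^z-1)/z$ explicit, which is reused later; your version avoids that auxiliary inequality and the case bookkeeping, at the cost of tracking the small-$b+q$ regime by hand (which you do correctly).
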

\begin{proof}
The first integrand increases on $[0,q]$ and decreases on $[q,\infty)$, so for $b\leq q$, comparing the integral to the area of a large rectangle,
\[
\int_0^b e^{-\omega}\omega^qd\omega \leq be^{-b}b^q\leq b^{1+q}
\]
while for $b\geq1+q$,
\[
\int_0^b e^{-\omega}\omega^qd\omega \leq \Gamma(1+q)\leq C^{1+q}(1+q)^{1+q}\leq C^{1+q}b^{1+q}
\]
If $q\geq1$ this also holds for $q<b<1+q$, since in that case $(1+q)^{1+q}\leq C^{1+q}b^{1+q}$. So all that remains for the upper bound is the case where $0\leq q <1$ and $q<b<1+q$, which implies
\[
\int_0^b e^{-\omega}\omega^qd\omega \leq \int_0^b \omega^qd\omega \leq b^{1+q}
\]
We now consider the lower bound. For $b\leq q$, comparing the integral to the area of a smaller rectangle,
\[
\int_0^b e^{-\omega}\omega^qd\omega \geq \frac{b}{2}e^{-b/2}\left(\frac{b}{2}\right)^q\geq c^{1+q}b^{1+q}
\]
For $b\geq q$ and $q\geq 1$, using what we have just proved,
\[
\int_0^b e^{-\omega}\omega^qd\omega \geq \int_0^q e^{-\omega}\omega^qd\omega \geq c^{1+q}q^{1+q}\geq c^{1+q}(1+q)^{1+q}
\]
For $b\geq q$ and $0\leq q< 1$, we consider two sub-cases: firstly $b\leq 1+q$, in which case
\[
\int_0^b e^{-\omega}\omega^qd\omega \geq c\int_0^b \omega^qd\omega \geq c^{1+q}b^{1+q}
\]
and secondly $b> 1+q$, in which case
\[
\int_0^b e^{-\omega}\omega^qd\omega \geq c \int_0^1 \omega^qd\omega \geq c \geq c^{1+q}(1+q)^{1+q}
\]
The second integral with $e^\omega$ instead of $e^{-\omega}$ can be estimated by writing $e^{\omega}\omega^q=\exp\left( \omega+q\ln \omega\right)$ and using $\ln \omega \leq (\omega-b)/b+\ln b$, valid for all $\omega \in (0,b]$, and $\ln \omega \geq 2(\omega-b)/b+\ln b$, valid for all $\omega \in [b/2,b]$. Here we also use $ce^z/(1+z)\leq(e^z-1)/z\leq Ce^z/(1+z)$ valid for all $z>0$.
\end{proof}

We will use the fact that for any non-increasing function $f:[1,n]\rightarrow \mathbb{R}$,
\[
\frac{1}{2}\left(f(1)+\int_1^nf(x)dx\right)\leq\sum_{i=1}^nf(i)\leq f(1)+\int_1^nf(x)dx
\]

\begin{lemma}
\label{Lo basic sum bound}For all $a,q\in \left[ 0,\infty \right) $ and all $n\geq 2$ the following is
true: If $a\in \left[ 0,1\right] $ then
\[
\sum_{i=1}^{n }i^{-a}\left( \ln \frac{n}{i}\right) ^{q} \leq \frac{C^{1+q}n^{1-a}\left( 1+q\right) ^{1+q}\left( \ln n\right) ^{1+q}}{%
\left( \left( 1-a\right) \ln n+1+q\right) ^{1+q}}
\]%
and if $a\in %
\left[ 1,\infty \right) $ the sum is bounded above by
\[
\frac{C\left( \ln n\right) ^{1+q}}{\left( a-1\right) \ln n+1+q}+\left( \ln
n\right) ^{q}
\]
The corresponding lower bounds hold by replacing $C$ with $c$. When $q=0$ and $i=n$ in the sum, we consider $0^0=1$.
\end{lemma}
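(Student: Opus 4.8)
The plan is to reduce the sum to an integral using the comparison $\frac{1}{2}\left(f(1)+\int_1^n f\right)\le\sum_{i=1}^n f(i)\le f(1)+\int_1^n f$ recorded just above the statement, applied to $f(x)=x^{-a}\left(\ln(n/x)\right)^q$. This $f$ is non-negative and non-increasing on $[1,n]$, being a product of two such functions, so the comparison applies; here $f(1)=(\ln n)^q$, and the substitution $\omega=\ln(n/x)$ (so $x=ne^{-\omega}$) gives
\[
\int_1^n x^{-a}\left(\ln(n/x)\right)^q\,dx=n^{1-a}\int_0^{\ln n}e^{(a-1)\omega}\omega^q\,d\omega .
\]
Everything then reduces to estimating $\int_0^{\ln n}e^{(a-1)\omega}\omega^q d\omega$ via Lemma \ref{incomplete gamma} and checking how the term $f(1)$ behaves.

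For $a\in[0,1]$, write the exponent as $-(1-a)\omega$ and rescale $u=(1-a)\omega$ (when $a=1$ the integral is just $(\ln n)^{1+q}/(1+q)$, requiring no rescaling); this gives $(1-a)^{-(1+q)}\int_0^{(1-a)\ln n}e^{-u}u^q\,du$, and the first inequality of Lemma \ref{incomplete gamma} bounds the inner integral above and below by universal constants to the power $1+q$ times $\min\{1+q,(1-a)\ln n\}^{1+q}$. Dividing the argument of the minimum by $1-a$ and then using $\min\{A,B\}\asymp AB/(A+B)$ turns this into universal constants to the power $1+q$ times $\left(\frac{(1+q)\ln n}{(1-a)\ln n+1+q}\right)^{1+q}$, which is exactly the stated estimate once the factor $n^{1-a}$ is restored. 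It remains to see that $f(1)=(\ln n)^q$ does not exceed this main term up to a universal constant: if $(1-a)\ln n\le 1+q$ the main term is comparable to $n^{1-a}(\ln n)^{1+q}\ge(\ln n)^{1+q}$, while if $(1-a)\ln n>1+q$ one checks, using that $s\mapsto e^s(1+q)^{1+q}/s^{1+q}$ has minimum value $e^{1+q}$ at $s=1+q$, that $n^{1-a}\left(\frac{1+q}{1-a}\right)^{1+q}$ also dominates $(\ln n)^q$. The only nuisance is $n=2$ with $\ln n<1$, which is handled by enlarging the universal constant.

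For $a\in[1,\infty)$ the exponent is $(a-1)\omega\ge 0$; rescale $u=(a-1)\omega$ (again $a=1$ is the direct computation) to obtain $(a-1)^{-(1+q)}\int_0^{(a-1)\ln n}e^u u^q\,du$, and apply the second inequality of Lemma \ref{incomplete gamma}, which bounds this by a universal constant times $(a-1)^{-(1+q)}\,\frac{e^{(a-1)\ln n}\left((a-1)\ln n\right)^{1+q}}{1+q+(a-1)\ln n}$. The powers of $a-1$ cancel and the factor $n^{1-a}$ cancels $e^{(a-1)\ln n}=n^{a-1}$, leaving $\int_1^n f$ comparable to $\frac{(\ln n)^{1+q}}{(a-1)\ln n+1+q}$; adding $f(1)=(\ln n)^q$ produces the claimed expression. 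In both regimes the matching lower bounds come from the lower estimates in Lemma \ref{incomplete gamma} together with the left half of the comparison and the trivial bound $\sum_{i=1}^n f(i)\ge f(1)$.

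I expect the only real obstacle to be bookkeeping the universal-constant-to-the-power-$(1+q)$ factors: one must confirm that each rescaling cleanly absorbs the $(1-a)^{\pm(1+q)}$ or $(a-1)^{\pm(1+q)}$ factor (it does, since the endpoint of integration scales the same way), and that the passage between a minimum and a harmonic mean, as well as the $n=2$ edge case, do not secretly cost a factor depending on $q$. Verifying that $f(1)$ is absorbed into the main term in the regime $a\in[0,1]$ — rather than surviving as a separate summand as it does for $a\ge1$ — is the one place that needs genuine, if short, care.
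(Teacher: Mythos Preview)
Your proposal is correct and follows essentially the same approach as the paper: compare the sum to an integral, change variables to reduce to Lemma~\ref{incomplete gamma}, and then absorb the $i=1$ term into the main bound for $a\in[0,1]$. Your single substitution $\omega=\ln(n/x)$ is marginally cleaner than the paper's two-step $e^{s/a}=n/x$ followed by $w=\pm(1-1/a)s$ (and handles $a=0$ without a separate case), and you give more detail on why $(\ln n)^q$ is absorbed, but the structure and key inputs are identical.
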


\begin{proof}
We focus on the upper bounds; the lower bounds follow the same steps. Integrals are estimated using Lemma \ref{incomplete gamma}, and we use the fact that $\min\{x,y\}$ is the same order of magnitude as $xy/(x+y)$. First, let $a\in \left( 0,\infty \right) $. Peeling off the first
term, comparing the remaining sum to an integral using monotonicity, and
setting $e^{s/a}=n/x$,%
\[
n^{-a}\sum_{i=1}^{n}\left( \frac{%
n}{i}\right) ^{a}\left( \ln \frac{n}{i}\right) ^{q}\leq \left( \ln n\right)
^{q}+\frac{n^{1-a}}{a^{1+q}}\int_0^{a\ln n}\exp \left( \left( 1-\frac{%
1}{a}\right) s\right) s^{q}ds 
\]%
If $a\in \left( 1,\infty \right) $ set $w=\left( 1-1/a\right) s$
to get%
\[
\left( \ln n\right) ^{q}+\frac{n^{1-a}}{\left( a-1\right) ^{1+q}}\int_0^{\left( a-1\right) \ln n}e^{w}w^{q}dw 
\]%
If $a=1$ we get $\left( \ln n\right) ^{q}+\int_{0}^{\ln n}s^{q}ds$ which can be absorbed into either the case $a\in \left( 0,1\right) $ or the case $a\in \left(
1,\infty \right) $. If $a\in \left( 0,1\right) $ then set $w=-\left( 1-1/a\right) s$ to get%
\[
\left( \ln n\right) ^{q}+\frac{n^{1-a}}{\left( 1-a\right) ^{1+q}}\int_0^{\left( 1-a\right) \ln n}e^{-w}w^{q}dw
\]%
If $a=0$, setting $w=\ln \left( n/x\right) $,%
\[
\sum_{i=1}^{n}i^{-a}\left( \ln 
\frac{n}{i}\right) ^{q}\leq \left( \ln n\right) ^{q}+\int_{1}^{n}\left(
\ln \frac{n}{x}\right) ^{q}dx\leq \left( \ln n\right) ^{q}+n\int_0^{\ln n}e^{-w}w^{q}dw 
\]
For $a\in \left[ 0,1\right]$ the factor $\left(\ln n \right)^{q}$ gets absorbed into the remaining term since
\[
\frac{C^{1+q}n^{1-a}\left( 1+q\right) ^{1+q}\left( \ln n\right) ^{1+q}}{%
\left( \left( 1-a\right) \ln n+1+q\right) ^{1+q}}=\frac{C^{1+q}n^{1-a}\left( \ln n\right) ^{1+q}}{%
\left(1+ \frac{\left( 1-a\right) \ln n}{1+q}\right) ^{1+q}}
\]
\end{proof}

The following lemma interpolates between the case $a=1$ and $a\neq 1$.
\begin{lemma}
\label{Lo basic power int}For all $\left(
a,T\right) \in \mathbb{R}\times \left[ 1,\infty \right) $,%
\[
c\frac{1+T^{1-a}}{1+\left\vert 1-a\right\vert \ln T}\ln T\leq
\int_{1}^{T}x^{-a}dx\leq C\frac{1+T^{1-a}}{1+\left\vert 1-a\right\vert \ln T}%
\ln T 
\]
\end{lemma}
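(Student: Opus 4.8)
The plan is to evaluate the integral explicitly and reduce the whole statement to a single elementary two-sided estimate for $(e^z-1)/z$. For $a\neq 1$ one has $\int_1^T x^{-a}\,dx=(T^{1-a}-1)/(1-a)$, and for $a=1$ it equals $\ln T$; the case $T=1$ is trivial, since then the integral and both sides of the claimed inequality all vanish, so I would assume $T>1$ from the outset.

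The key move is the substitution $z=(1-a)\ln T$, so that $T^{1-a}=e^{z}$ and, because $\ln T>0$, $1-a=z/\ln T$ and $|1-a|\ln T=|z|$. For $a\neq 1$ this rewrites the integral as
\[
\int_1^T x^{-a}\,dx=\frac{e^{z}-1}{z/\ln T}=(\ln T)\,\frac{e^{z}-1}{z},
\]
while the target upper bound (and likewise the lower bound) becomes
\[
C\,\frac{1+T^{1-a}}{1+|1-a|\ln T}\,\ln T=C\,(\ln T)\,\frac{1+e^{z}}{1+|z|}.
\]
So the lemma is equivalent to the estimate
\[
c\,\frac{1+e^{z}}{1+|z|}\le\frac{e^{z}-1}{z}\le C\,\frac{1+e^{z}}{1+|z|}\qquad(z\in\mathbb{R}),
\]
with the expression at $z=0$ read as its limit $1$. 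Note this also covers $a=1$: there $z=0$ and $(\ln T)\cdot\tfrac{e^{z}-1}{z}$ is to be read as $(\ln T)\cdot 1=\ln T$, which is exactly the integral.

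To prove the displayed estimate I would split on the sign of $z$ and invoke the inequality $ce^{z}/(1+z)\le(e^{z}-1)/z\le Ce^{z}/(1+z)$ for $z>0$, which is already used in the proof of Lemma \ref{incomplete gamma}. For $z>0$ the quantity $1+e^{z}$ has the same order of magnitude as $e^{z}$, so that inequality is precisely what is wanted. For $z<0$, set $w=-z>0$ and write $(e^{z}-1)/z=(1-e^{-w})/w=e^{-w}(e^{w}-1)/w$, which by the same inequality has order $e^{-w}\cdot e^{w}/(1+w)=1/(1+w)=1/(1+|z|)$; since $1+e^{z}=1+e^{-w}$ has order $1$, the two sides again match. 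The value $z=0$ is consistent since $1\le 2$.

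I do not expect a genuine obstacle: the argument is essentially a change of variables plus a known estimate. The only points needing care are tracking the sign of $1-a$ through the substitution and treating the degenerate cases $a=1$ and $T=1$ so that the statement holds literally (not merely up to constants) there.
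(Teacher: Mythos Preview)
Your proposal is correct and follows essentially the same approach as the paper: write $\int_1^T x^{-a}\,dx=(\ln T)\,(e^{z}-1)/z$ with $z=(1-a)\ln T$, then show $(e^{z}-1)/z$ has the order of $(1+e^{z})/(1+|z|)$, and check the degenerate cases $a=1$, $T=1$. The only cosmetic difference is that the paper splits into the three ranges $z\le -1$, $-1<z<1$, $z\ge 1$ and appeals to the secant-line interpretation, whereas you split by the sign of $z$ and reduce to the $z>0$ inequality already noted in Lemma~\ref{incomplete gamma}; both arguments are equivalent.
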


\begin{proof}
First assume $a\neq 1$ and $T\neq 1$ and write%
\[
\int_{1}^{T}x^{-a}dx=\frac{\exp \left( \left( 1-a\right) \ln T\right) -1}{%
\left( 1-a\right) \ln T}\ln T 
\]%
Then interpret $s^{-1}\left( \exp \left( s\right) -1\right) $ as the slope
of a secant line and bound it above and below by $C\left( 1+e^{s}\right)
/\left( 1+\left\vert s\right\vert \right) $ in the cases $s\leq -1$, $s\in
\left( -1,1\right) \backslash \left\{ 0\right\} $ and $1\leq s$. Then notice
that the estimate also holds when $a=1$ and/or $T=1$.
\end{proof}

The following lemma is taken from \cite{Fr4} (details can be found in the first arXiv version of \cite{FrL}).
\begin{lemma}
\label{normalorderstats}Let $n\geq 3$, $t\geq 0$, and let $X$ and $Y$ be independent random vectors in $\mathbb{R}^n$, each with the standard normal distribution. With probability at least $1-C\exp\left(-t^2\right)$, the following event occurs: for all $1\leq i \leq (n+1)/2$,
\begin{equation}
X_{\left[ i\right] }\leq C\left( \ln \frac{n}{i}+\frac{t^{2}}{i}\right)
^{1/2} \label{orderstat estimate}
\end{equation}
and with probability at least $0.51$ the following event occurs: for all $1\leq i \leq (n+1)/2$,
\begin{equation}
c\sqrt{\ln \frac{n}{i}}\leq X_{[i]}\leq C\sqrt{\ln \frac{n}{i}}\label{simultaneous medians}
\end{equation}
\end{lemma}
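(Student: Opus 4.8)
The plan is to reduce both assertions to tail bounds for a single binomial random variable. Write $X_{[i]}$ for the $i$-th largest of $|X_1|,\dots,|X_n|$ and $p(a)=\mathbb{P}\{|X_1|>a\}$ for $a\ge0$. The basic observation is that the event $\{X_{[i]}>a\}$ is exactly $\{\#\{j:|X_j|>a\}\ge i\}$, and $\#\{j:|X_j|>a\}$ has the $\mathrm{Binomial}(n,p(a))$ law, so
\[
\mathbb{P}\{X_{[i]}>a\}=\mathbb{P}\{\mathrm{Bin}(n,p(a))\ge i\}\le\left(\frac{e\,n\,p(a)}{i}\right)^{i},\qquad\mathbb{P}\{X_{[i]}<a\}=\mathbb{P}\{\mathrm{Bin}(n,p(a))\le i-1\},
\]
the first inequality by a union bound over the $i$-element subsets of $\{1,\dots,n\}$ together with $i!\ge(i/e)^i$. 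Into these I would feed the two elementary one-dimensional Gaussian estimates $p(a)\le e^{-a^2/2}$ and $p(a)\ge\kappa(1+a)^{-1}e^{-a^2/2}$, valid for all $a\ge0$ with $\kappa=\sqrt{2/\pi}$; the sharp form of the second is the crucial ingredient for the lower estimate in $(\ref{simultaneous medians})$. Note that $Y$ occurs in neither displayed event; the proof applies to it verbatim, and the hypothesis that $X$ and $Y$ be independent matters only when one later intersects the event for $X$ with its analogue for $Y$, which is possible precisely because each has probability exceeding $1/2$.

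For $(\ref{orderstat estimate})$, I would set $a_i=C(\ln(n/i)+t^2/i)^{1/2}$ and apply the first display with $a=a_i$. Since $p(a_i)\le e^{-a_i^2/2}=(i/n)^{C^2/2}e^{-C^2t^2/(2i)}$, the $i$-th term is at most $\bigl(e\,(i/n)^{C^2/2-1}\bigr)^{i}e^{-C^2t^2/2}$. Because $i/n\le2/3$ whenever $1\le i\le(n+1)/2$ and $n\ge3$, choosing $C$ a large universal constant (say $C=3$) makes $e\,(2/3)^{C^2/2-1}<1$, so a union bound over $i$ followed by summing a geometric series gives $\mathbb{P}\{\exists\,i\le(n+1)/2:\,X_{[i]}>a_i\}\le C\exp(-c\,t^2)$; after the constant clean-up described in the Notation section this is $(\ref{orderstat estimate})$. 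The upper half of $(\ref{simultaneous medians})$ is then immediate: apply $(\ref{orderstat estimate})$ with $t$ a large universal constant $t_0$, so the failure probability becomes a small universal constant, and on the good event bound $C(\ln(n/i)+t_0^2/i)^{1/2}\le C\bigl(1+t_0^2/\ln\frac{3}{2}\bigr)^{1/2}\sqrt{\ln(n/i)}$, using $\ln(n/i)\ge\ln(2n/(n+1))\ge\ln\frac{3}{2}$ for $1\le i\le(n+1)/2$, $n\ge3$.

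The substance is the lower half of $(\ref{simultaneous medians})$: I must produce a small universal $c$ with $\mathbb{P}\{\exists\,i\le(n+1)/2:\,X_{[i]}<b_i\}<\frac12$, where $b_i=c\sqrt{\ln(n/i)}$; with the preceding paragraph this gives probability at least $0.51$. Put $p_i=p(b_i)$, $\mu_i=np_i$, $r_i=\mu_i/i$; the Chernoff lower-tail bound $\mathbb{P}\{\mathrm{Bin}(n,p)\le k\}\le(np/k)^{k}e^{k-np}$ gives $\mathbb{P}\{X_{[i]}<b_i\}\le(r_ie^{1-r_i})^{i}$. The role of the sharp Gaussian lower tail is that $p_i\ge\kappa(1+b_i)^{-1}(i/n)^{c^2/2}$, hence $r_i\ge\kappa(1+b_i)^{-1}(n/i)^{1-c^2/2}$; for $c$ small this is decreasing in $i$ on $1\le i\le(n+1)/2$, it blows up polynomially as $i/n\to0$, and at $i\approx(n+1)/2$ (where $b_i\approx c\sqrt{\ln2}$ and $n/i$ is close to $2$) it tends to $2\kappa=\sqrt{8/\pi}>1$ as $c\to0$ and $n\to\infty$. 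I would then split at a small universal threshold $x_0n$. For $i\le x_0n$ one has $r_i\ge R$ with $R=R(c,x_0)$ arbitrarily large (for $x_0$ small), so $\sum_{i\le x_0n}(r_ie^{1-r_i})^{i}\le\sum_{i\ge1}(Re^{1-R})^{i}$ is negligible, uniformly in $n$. For $x_0n<i\le(n+1)/2$ one has $r_i\ge\rho_0$ for a universal $\rho_0>1$; since $\rho_0e^{1-\rho_0}<1$, $\sum_{i>x_0n}(r_ie^{1-r_i})^{i}\le\sum_{i>x_0n}(\rho_0e^{1-\rho_0})^{i}\le(\rho_0e^{1-\rho_0})^{x_0n}/(1-\rho_0e^{1-\rho_0})$, which is below $\frac14$ as soon as $n$ exceeds a universal $N_0$. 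For the finitely many $n$ in $[3,N_0)$ the lower bound holds with probability arbitrarily close to $1$ by shrinking $c$, since $X_{[i]}>0$ almost surely; taking $c$ to be the minimum of these finitely many universal requirements finishes the proof.

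The step I expect to be the real obstacle is getting the numerical constant $0.51$ rather than merely ``a positive constant'': this forces the bulk ratio $r_i$ at $i\approx(n+1)/2$ to exceed $1$ with genuine room, which is possible only because of the sharp lower tail $p(a)\ge\kappa(1+a)^{-1}e^{-a^2/2}$ — a cruder bound such as $p(a)\ge\frac12e^{-a^2}$ gives $r_i\approx1$ with no margin, and then the geometric series over the bulk does not close. The rest is bookkeeping: the separate (elementary) treatment of $n<N_0$ and the normalization of constants from the Notation section. A cleaner organization, if preferred, uses the quantile transform: $p(X_{[i]})$ has the law of the $i$-th smallest of $n$ i.i.d.\ uniforms, so both displays assert that, with probability bounded below by a universal constant, all these uniform order statistics lie in the multiplicative window $\bigl[p\bigl(C\sqrt{\ln(n/i)}\bigr),\,p\bigl(c\sqrt{\ln(n/i)}\bigr)\bigr]$ about $i/n$ — a standard fact whose proof is exactly the binomial computation above.
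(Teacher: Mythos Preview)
The paper does not actually prove this lemma: immediately before the statement it says ``The following lemma is taken from \cite{Fr4} (details can be found in the first arXiv version of \cite{FrL})'' and then moves on. So there is no in-text argument to compare against; what you have supplied is a self-contained proof where the paper only cites one.

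Your argument is sound. The reduction of $\{X_{[i]}>a\}$ and $\{X_{[i]}<a\}$ to binomial tails, the union bound $(enp/i)^i$ for the upper estimate, and the Chernoff form $(r_ie^{1-r_i})^i$ for the lower estimate are all correct, and you are right that the sharp one-sided Gaussian lower tail $p(a)\ge\kappa(1+a)^{-1}e^{-a^2/2}$ is what makes the bulk ratio exceed $1$ with room to spare (a cruder bound would indeed fail here). Two small points of bookkeeping you should tidy: (i) at $i=\lfloor(n+1)/2\rfloor$ the ratio $n/i$ is only $\ge 3/2$, not close to $2$, for small $n$; your limiting value $2\kappa=\sqrt{8/\pi}$ is the large-$n$ value, and the uniform lower bound you actually get is $\kappa\cdot(3/2)^{1-c^2/2}/(1+c\sqrt{\ln 2})\to(3/2)\sqrt{2/\pi}\approx1.20$ as $c\to0$, which still exceeds $1$ but with less margin --- this only enlarges your threshold $N_0$, so the finitely-many-$n$ patch still closes the argument; (ii) to reach the specific constant $0.51$ you need the lower-tail failure probability strictly below $0.49$ minus the upper-tail failure probability, not merely below $1/2$ as you write at one point; you clearly intend this (you later aim for $1/4$), but it should be said consistently. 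Neither point affects the structure of the proof.
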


\begin{lemma}
\label{Lo med calculation}Let $0\leq r<\infty$, $0\leq p<\infty$, $n\geq2$, and let $X$ be a random vector in $\mathbb{R%
}^{n}$ with the standard normal distribution. If $r\in \left[ 0,1\right]$ then
\[
\mathbb{M}\sum_{i=1}^{n}i^{-r}X_{\left[ i\right] }^{p}\leq \frac{C^{1+p}(1+p)^{p/2}n^{1-r}\left( \ln n\right) ^{1+p/2}}{\left[1+p+(1-r) \ln n\right] ^{1+p/2}}
\]
and if $r\in \left[ 1,\infty \right)$ then
\[
\mathbb{M}\sum_{i=1}^{n}i^{-r}X_{\left[ i\right] }^{p}\leq \frac{C^{1+p}\left( \ln n\right)^{1+p/2}}{1+(r-1) \ln n}+C^p\left(\ln n\right)^{p/2} 
\]
with the reverse inequalities holding with $C$ replaced by $c$.
\end{lemma}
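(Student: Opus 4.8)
The strategy is to sandwich the median of $\sum_{i=1}^n i^{-r} X_{[i]}^p$ between two deterministic quantities by using the order-statistics bounds of Lemma \ref{normalorderstats}, and then to evaluate those deterministic quantities using the summation estimate of Lemma \ref{Lo basic sum bound} (with $a=r$ and $q=p/2$). For the upper bound on the median: on the event of probability $\geq 1-C\exp(-t^2)$ from \eqref{orderstat estimate}, taken at a fixed constant $t$ (say $t$ large enough that $C\exp(-t^2)<1/2$), we have $X_{[i]}\leq C(\ln(n/i)+t^2/i)^{1/2}$ for all $i\leq(n+1)/2$, so
\[
\sum_{i=1}^n i^{-r}X_{[i]}^p \leq \sum_{i\leq(n+1)/2} i^{-r}\, C^p\bigl(\ln\tfrac{n}{i}+\tfrac{t^2}{i}\bigr)^{p/2} + \sum_{i>(n+1)/2} i^{-r}X_{[i]}^p.
\]
Since this holds with probability exceeding $1/2$, the median is at most the right-hand side evaluated on that event — except that the right-hand side is still random through the upper tail $i>(n+1)/2$. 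One handles the upper half either by noting $X_{[i]}\leq X_{[1]}\leq C(\ln n + t^2)^{1/2}$ there and absorbing the resulting $O(n^{1-r})$ or $O(\ln n)$ contribution (it is dominated by the main term), or by a symmetry/reflection argument; this is a minor technical point and the paper's bound has enough room to absorb it. With $q=p/2$ and $1+q$ comparable to $1+p$, Lemma \ref{Lo basic sum bound} gives exactly the stated form once one checks that the extra $t^2/i$ inside the power only changes constants (use $(\ln(n/i)+t^2/i)^{p/2}\leq C^p(\ln(n/i))^{p/2}+C^p(t^2/i)^{p/2}$ and bound the second piece by a convergent or logarithmically-growing sum that is lower order).

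For the lower bound on the median: on the event of probability $\geq 0.51$ from \eqref{simultaneous medians}, we have $X_{[i]}\geq c\sqrt{\ln(n/i)}$ for all $i\leq(n+1)/2$, hence
\[
\sum_{i=1}^n i^{-r}X_{[i]}^p \geq \sum_{i\leq(n+1)/2} i^{-r} c^p\bigl(\ln\tfrac{n}{i}\bigr)^{p/2}.
\]
An event of probability $0.51$ forces the median to be at least the value it takes there (more precisely, $\mathbb{P}\{\sum i^{-r}X_{[i]}^p \geq \text{RHS}\}\geq 0.51 > 1/2$, so the median is $\geq$ the RHS). Applying the lower-bound form of Lemma \ref{Lo basic sum bound} to $\sum_{i\leq(n+1)/2} i^{-r}(\ln(n/i))^{p/2}$ — noting that restricting to $i\leq(n+1)/2$ rather than $i\leq n$ costs only a constant factor, since the summand at $i$ near $n$ is small (the $(\ln(n/i))^{p/2}$ factor vanishes there) — yields the claimed lower bound in both regimes $r\in[0,1]$ and $r\in[1,\infty)$.

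The split into the two cases $r\in[0,1]$ and $r\in[1,\infty)$ is inherited directly from Lemma \ref{Lo basic sum bound}, and the case $a=1$ there is handled by absorption into either side, so nothing new is needed here. The main obstacle I anticipate is purely bookkeeping: controlling the upper tail $i>(n+1)/2$ in the upper-bound argument (where Lemma \ref{normalorderstats} gives no pointwise control on $X_{[i]}$) and verifying that the $t^2/i$ correction term, the $(\ln n)^{q}$ leftover terms, and the restriction of the summation range all get absorbed into the stated main terms without degrading the exponents. None of these is deep — each is a comparison of two explicit functions of $n$, $r$, $p$ — but assembling them cleanly for all $r\in[0,\infty)$, $p\in[0,\infty)$, $n\geq2$ simultaneously is where the care lies. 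The edge case $p=0$ (where the sum is $\sum i^{-r}$ and matches Lemma \ref{Lo basic power int}) and the convention $0^0=1$ should be checked to be consistent with the formulas, but these are immediate.
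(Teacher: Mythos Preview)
Your proposal is correct and follows essentially the same route as the paper, combining Lemma~\ref{normalorderstats} with Lemma~\ref{Lo basic sum bound} (with $a=r$, $q=p/2$). The paper's argument is marginally cleaner in that it invokes only \eqref{simultaneous medians} for \emph{both} directions---the event of probability $\geq 0.51$ already gives the two-sided pointwise bound $c\sqrt{\ln(n/i)}\leq X_{[i]}\leq C\sqrt{\ln(n/i)}$---so the $t^2/i$ correction from \eqref{orderstat estimate} never enters and there is nothing extra to absorb.
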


\begin{proof}
The result follows from Eq. (\ref{simultaneous medians}) of Lemma \ref{normalorderstats}, together with Lemma \ref{Lo basic
sum bound}.
\end{proof}

\begin{lemma}
\label{Lo Lip con}Let $0\leq r<\infty$ and $0<p<\infty$. Then%
\[
\sup \left\{ \left( \sum_{i=1}^{n}i^{-r}\theta _{\left[ i\right]
}^{p}\right) ^{1/p}:\theta \in S^{n-1}\right\} =\left\{ 
\begin{array}{ccc}
\left( \sum_{1}^{n}i^{-2r/(2-p)}\right) ^{(2-p)/2p} & : & p\in \left(0,2\right) \\ 
1 & : & p\in \left[ 2,\infty \right)%
\end{array}%
\right. 
\]%
For $p\in \left[2/3,2\right) $ this can be bounded above by%
\[
1+C\left( \frac{\ln n}{1+\left\vert2-2r-p\right\vert
\ln n}\right) ^{\frac{2-p}{2p}}\left( 1+n^{\frac{2-2r-p}{2p}}\right) 
\]
and below by the same quantity with the leftmost $1$ replaced by $1/2$ and $C$ replaced by $c$. For $r<r_0$ (for any universal constant $r_0>1$) the leftmost `$1+$' (or `$1/2+$') can be deleted. The same comments apply for $p\in(0,2/3)$ by multiplying the entire expression by $C^{1/p}$ for an upper bound and $c^{1/p}$ for a lower bound.
\end{lemma}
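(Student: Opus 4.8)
The plan is to prove the exact formula by a rearrangement-plus-Hölder argument, and then extract the asymptotic estimates from Lemma \ref{Lo basic power int} after a change of variable.

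First I would establish the displayed identity. As $\theta$ ranges over $S^{n-1}$, the vector $(\theta_{[i]})_1^n$ ranges over all non-increasing nonnegative sequences of Euclidean norm $1$, so the supremum equals the maximum of $\sum_{i=1}^n i^{-r}a_i^p$ over such sequences, which is $\leq$ the unconstrained maximum over all nonnegative $(a_i)$ with $\sum a_i^2=1$. For $p\in(0,2)$, put $b_i=a_i^2$ and maximize the concave functional $\sum i^{-r}b_i^{p/2}$ over the simplex; Hölder's inequality with conjugate exponents $\tfrac{2}{2-p}$ and $\tfrac{2}{p}$ gives $\sum i^{-r}b_i^{p/2}\leq\bigl(\sum i^{-2r/(2-p)}\bigr)^{(2-p)/2}\bigl(\sum b_i\bigr)^{p/2}$, with equality when $b_i\propto i^{-2r/(2-p)}$, which is a non-increasing sequence; hence the constrained and unconstrained maxima agree, and raising to the power $1/p$ yields the stated value. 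For $p\in[2,\infty)$ one has $\sum i^{-r}\theta_{[i]}^p\leq\sum|\theta_i|^p=\|\theta\|_p^p\leq\|\theta\|_2^p=1$, with equality at $\theta=e_1$, so the supremum is $1$.

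Next, for $p\in[2/3,2)$ set $a=\tfrac{2r}{2-p}\geq 0$ and $\beta=\tfrac{2-p}{2p}$, so that the quantity to estimate is $\bigl(\sum_{i=1}^n i^{-a}\bigr)^{\beta}$ and $\beta\in(0,1]$. The monotone-sum bound quoted just before Lemma \ref{Lo basic sum bound} together with Lemma \ref{Lo basic power int} (applied with $T=n$) gives $\sum_{i=1}^n i^{-a}$ comparable to $1+\tfrac{(1+n^{1-a})\ln n}{1+|1-a|\ln n}$. Raising both sides to the power $\beta$ and using that $t\mapsto t^{\beta}$ is sub/super-additive with universal constants for $0<\beta\leq1$, I pull the power through the ``$1+$'' and split $(1+n^{1-a})^{\beta}$ into $1+n^{(1-a)\beta}$; since $(1-a)\beta=\tfrac{2-2r-p}{2p}$, this produces exactly the exponent of $n$ in the statement.

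It then remains to replace $|1-a|\ln n=\tfrac{|2-2r-p|}{2-p}\ln n$ by $|2-2r-p|\ln n$ inside the denominator. Because $p\in[2/3,2)$ forces $2-p\in(0,\tfrac{4}{3}]$, we have $|1-a|\geq\tfrac{3}{4}|2-2r-p|$, which immediately gives the upper bound; for the lower bound, the ratio $\tfrac{1+|2-2r-p|\ln n}{1+|1-a|\ln n}$ is at least $1$ when $p\leq 1$ and at least $2-p$ when $1<p<2$, so its $\beta$-th power is bounded below by a universal constant since $\beta\leq\tfrac{2-p}{2}$ and $\delta\mapsto\delta^{\delta/2}$ is bounded away from $0$ on $(0,1]$. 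This last point is the main obstacle: one must keep every constant universal as $p\to 2$, where $\beta\to 0$ while $1/(2-p)\to\infty$, and this succeeds only because the relevant powers of $2-p$ stay bounded. The remark ``delete the leading $1+$ when $r<r_0$'' follows because then $a$ is bounded, so the sum is already comparable to its main term; the case $p\in(0,2/3)$ is identical except that $\beta>1$, so $(1+x)^{\beta}\leq 2^{\beta-1}(1+x^{\beta})\leq C^{1/p}(1+x^{\beta})$ and the constants degrade by the factor $C^{1/p}$ as stated.
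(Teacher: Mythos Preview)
Your approach is essentially the same as the paper's: H\"older duality for $\ell_{2/(2-p)}^n$--$\ell_{2/p}^n$ to obtain the exact identity (with the extremizer $\theta_i\propto i^{-r/(2-p)}$ automatically non-increasing), then the integral comparison and Lemma~\ref{Lo basic power int} for the asymptotic estimate, with the key observation that $(2-p)^{(2-p)}$ stays bounded as $p\to 2$ so that all constants remain universal. You have written out the denominator-replacement step (trading $|1-a|\ln n$ for $|2-2r-p|\ln n$) in more detail than the paper, and that detail is correct.

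One small imprecision: in justifying the deletion of the leading ``$1+$'' for $r<r_0$, you assert that ``then $a$ is bounded.'' This is false: $a=2r/(2-p)$ blows up as $p\to 2$ for any fixed $r>0$. The right observation is that $|2-2r-p|$ (not $a$) is bounded for $r<r_0$ and $p\in[2/3,2)$, so $\frac{\ln n}{1+|2-2r-p|\ln n}\geq c$ for $n\geq 2$; since $\beta\leq 1$ its $\beta$-th power is also $\geq c$, and together with $1+n^{(2-2r-p)/(2p)}\geq 1$ this makes the second term bounded below by a universal constant, absorbing the leading $1$. The conclusion stands; only the one-line justification needs this fix.
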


\begin{proof}
For $p\in \left(0,2\right) $ an upper bound follows by H\"{o}lder's
inequality for $\ell _{2/(2-p)}^{n}-\ell _{2/p}^{n}$ duality, with equality
when $\theta _{i}=i^{-r/(2-p)}\left( \sum_{j=1}^{n}j^{-2r/(2-p)}\right)
^{-1/2}$. Now 
\[
\left( \sum_{1}^{n}i^{-2r/(2-p)}\right) ^{(2-p)/2p}\leq \left(
1+\int_{1}^{n}x^{-2r/(2-p)}dx\right) ^{(2-p)/2p} 
\]%
which is bounded using Lemma \ref{Lo basic power int} and noting that for $p\in[2/3,2)$,
$0<(2-p)/(2p)\leq 1$ and that for $p\in(0,2)$, $c<(2-p)^{(2-p)}<C$. For $p\in \left[ 2,\infty \right) $, $\left(
\sum_{i=1}^{n}i^{-r}\theta _{\left[ i\right] }^{p}\right) ^{1/p}\leq \left(
\sum_{i=1}^{n}\theta _{\left[ i\right] }^{p}\right) ^{1/p}\leq 1$ with
equality when $\theta =e_{1}$.
\end{proof}

We shall use the fact that for all $b\in [1,n]$, not necessarily an integer,
\[
\sum_{i=1}^{n}i^{-r}x _{\left[ i\right]}^{p}\leq \frac{2n}{b}\sum_{i=1}^{b}i^{-r}x _{\left[ i\right]}^{p}
\]
where the sum on the right is over all $i\in \mathbb{N}$ such that $1\leq i \leq b$.

\section{\label{linords}Statement and proof of the main result}

\begin{theorem}\label{orderorderbound}
Let $X$ be a random vector in $\mathbb{R}^n$, $n\geq3$, with the standard normal distribution, $0\leq r<\infty$, $1\leq p<\infty$ and $t>0$. In each of the following four cases, definitions are given for $R$, $S$ are $\left\vert \cdot \right \vert_{\sharp}$, and in each case,
\[
\mathbb{P}\left\{\left\vert X \right \vert_{\sharp}\leq S\right\}\geq 1-C\exp\left(-t^2/2\right) \hspace{1cm}\textit{and} \hspace{1cm} \left\vert X \right \vert_{\sharp}\leq S \Rightarrow \sum_{i=1}^{n}i^{-2r}X_{[i]}^{2(p-1)}\leq R
\]

\textbf{Case I:} If $p\in \left[3/2,\infty \right)$ then for all $x\in \mathbb{R}^n$ set
\[
 \left\vert x \right \vert_{\sharp}=\left(\sum_{i=1}^{n}i^{-2r}x_{[i]}^{2(p-1)}\right)^{\frac{1}{2(p-1)}}
 \]
 and $R=S^{2(p-1)}=A+Bt^{2(p-1)}$ where
 \[
 A=\frac{C^p p^p n^{1-2r} \left( \ln n \right)^p}{\left[ p+(1-2r) \ln n\right]^p}1_{\{0\leq r\leq 1/2\}}+
 \left(\frac{C^p\left( \ln n \right)^p}{1+(2r-1) \ln n }+C^p\left(\ln n \right)^{p-1}\right)1_{\{1/2< r<\infty\}}
 \]
 and
 \[
 B=C\left[1+\left( \frac{\ln n}{1+\left\vert 2-2r-p \right\vert \ln n} \right)^{2-p}\left( 1+n^{2-2r-p} \right)\right]1_{\{3/2\leq p< 2\}}+C^p1_{\{2\leq p< \infty\}}
 \]
In this case $\left\vert \cdot \right \vert_{\sharp}$ is a norm.

\textbf{Case II:} If $p\in \left[1,3/2 \right)$ then for all $x\in \mathbb{R}^n$,
\[
\sum_{i=1}^{n}i^{-2r}x_{[i]}^{2(p-1)} \leq C\left( \sum_{i=1}^{n/e}i^{-2r}\left( \ln \frac{n}{i}+\frac{t^{2}
}{i}\right) ^{p-1}\right) ^{3-2p}\left\vert x \right \vert_{\sharp}^{2\left(
p-1\right) }
\]
where
\begin{equation}
\left\vert x \right \vert_{\sharp}=\sum_{i=1}^{n/e}\frac{i^{-2r}x_{\left[ i\right] }}{\left( \ln
\frac{n}{i} +\frac{t^{2}}{i}\right) ^{\frac{3-2p}{2}}} \label{nomnom}
\end{equation}
and
\begin{eqnarray*}
R&=&S=C\sum_{i=1}^{n/e}i^{-2r}\left( \ln \frac{n}{i}+\frac{t^{2}
}{i}\right) ^{p-1}\\
&\leq&\frac{Cn^{1-2r}\left(\ln n\right)^p}{\left[1+(1-2r)\ln n \right]^p}1_{\{0\leq r \leq 1/2\}}+
C\left(\frac{\left(\ln n \right)^p}{1+(2r-1)\ln n}+\left(\ln n \right)^{p-1} \right)1_{\{1/2< r<\infty\}}\\
&+&C\left(1+\frac{1+n^{2-2r-p}}{1+\left\vert 2-2r-p\right\vert\ln n}\ln n \right)t^{2(p-1)}
\end{eqnarray*}
Under the added assumption that $p \geq 3/2-2r$, and because the sums have been restricted to $1\leq i \leq n/e$, $\left\vert \cdot \right \vert_{\sharp}$ is a norm.

\textbf{Case III:} If $p<3/2-2r$ (so necessarily $0\leq r <1/4$ and $1\leq p <3/2$), then for all $x\in \mathbb{R}^n$,
\[
\sum_{i=1}^{n}i^{-2r}x_{[i]}^{2(p-1)} \leq Cn^{(1-2r)(3-2p)}\left\vert x \right \vert_{\sharp}^{2(p-1)}
\]
where
\[
\left\vert x \right \vert_{\sharp}=\sum_{i=1}^{n}i^{-2r}x_{[i]}
\]
and
\begin{eqnarray*}
S&=&Cn^{1-2r}+Cn^{\frac{1-4r}{2}}\left( \frac{\ln n}{1+(1-4r) \ln n} \right)^{\frac{1}{2}}t \\
R&=&Cn^{(1-2r)(3-2p)}S^{2(p-1)}\leq Cn^{1-2r}+Cn^{2-2r-p}\left(\frac{\ln n}{1+(1-4r)\ln n}\right)^{p-1}t^{2(p-1)}
\end{eqnarray*}

\textbf{Case IV:} When $r\in \left(1/4,1/2\right]$ and $p=2(1-r)$, in which case $p\in \left[1,3/2 \right)$, the result in Case II can be improved as follows:

\underline{Case IVa:} If $(1-2r)\ln n \geq e$ (which excludes the case $p=1$), then for all $x\in \mathbb{R}^n$,
\[
\sum_{i=1}^{n}i^{-2r}x_{[i]}^{2(p-1)} \leq C\left(\ln n\right)^{3-2p} \left\vert x \right \vert_{\sharp}^{2(p-1)}
\]
where
\[
\left\vert x \right \vert_{\sharp}=\sum_{i=1}^{n/e} \beta_i^{\frac{-(3-2p)}{2(p-1)}}i^{\frac{-r}{p-1}}x_{[i]},
\]
and
\[
\beta_i=\frac{(1-2r)^p \ln n}{n^{1-2r}}i^{-2r}\left( \ln \frac {n}{i}\right)^{p-1}+i^{-1}
\]
The coefficient $\beta_i^{\frac{-(3-2p)}{2(p-1)}}i^{\frac{-r}{p-1}}$ is non-increasing in $i$ for $1\leq i \leq n/e$ (so that $\left\vert \cdot \right \vert_{\sharp}$ is a norm). In this case
\[
S=C^{\frac{1}{p-1}}(1-2r)^{\frac{-p}{2(p-1)}}\left( \ln n \right)^{\frac{-(3-2p)}{2(p-1)}}n^{\frac{1}{2}}+C^{\frac{1}{p-1}}\left( \ln n \right)^{\frac{1}{2}}t
\]
and
\[
R=C\left(\ln n\right)^{3-2p} S^{2(p-1)}\leq C(1-2r)^{-1}n^{1-2r}+C\left( \ln n \right)^{2-p}t^{2(p-1)}
\]

\underline{Case IVb:} If $(1-2r)\ln n < e$, then for all $x\in \mathbb{R}^n$,
\[
\sum_{i=1}^{n}i^{-2r}x_{[i]}^{2(p-1)} \leq C\left( \ln n\right) \left \vert x\right \vert_{\sharp}^{2(p-1)}
\]
where $\left \vert \cdot \right \vert_{\sharp}=\left\vert\cdot\right\vert$ is the standard Euclidean norm, $S=Cn^{\frac{1}{2}}+t$, and
\[
R=C\left( \ln n\right) S^{2(p-1)}\leq C\left( \ln n \right)t^{2(p-1)}
\]
\end{theorem}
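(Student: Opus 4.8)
The plan is to handle the four cases uniformly by the same two-step template: (i) produce a convex sub-level set $\{\,|x|_\sharp\le S\,\}$ that is contained in the sub-level set $\{\,\sum_i i^{-2r}x_{[i]}^{2(p-1)}\le R\,\}$, which amounts to proving the stated pointwise inequality $\sum_i i^{-2r}x_{[i]}^{2(p-1)}\le(\text{coefficient})\,|x|_\sharp^{2(p-1)}$ together with the definition $R=(\text{coefficient})\,S^{2(p-1)}$; and (ii) estimate $\mathbb P\{|x|_\sharp\le S\}$ from below using Gaussian concentration, for which we need $\mathrm{Lip}(|\cdot|_\sharp)$ (computed as the supremum of the dual norm over $S^{n-1}$ via Lemma \ref{Lo Lip con}) and an upper bound on $\mathbb M\,|X|_\sharp$ (via Lemma \ref{Lo med calculation}, or, in Cases II and IV, directly from the order-statistic bound (\ref{orderstat estimate}) of Lemma \ref{normalorderstats}). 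Then $S$ is chosen as (median) $+\,t\cdot\mathrm{Lip}$, and the concentration inequality gives the $1-C\exp(-t^2/2)$ bound.

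For Case I the pointwise inequality is an equality ($|x|_\sharp$ \emph{is} the relevant quantity raised to the power $1/(2(p-1))$), so only step (ii) is needed: $|\cdot|_\sharp$ is the Lorentz $(\omega,p')$-quasi-norm with $p'=2(p-1)\ge1$ since $p\ge3/2$, hence a norm; its Lipschitz constant is given by Lemma \ref{Lo Lip con} with $p\rightsquigarrow 2(p-1)$ and $r\rightsquigarrow 2r$, which produces the factor $B$; its median is bounded by Lemma \ref{Lo med calculation} with $p\rightsquigarrow 2(p-1)$, $r\rightsquigarrow 2r$, giving $A^{1/(2(p-1))}$; and then $S^{2(p-1)}=(\mathbb M+t\,\mathrm{Lip})^{2(p-1)}\le C(A+Bt^{2(p-1)})$ after absorbing constants. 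For Cases II, III, IVa the pointwise inequality is the real content: write $\sum_i i^{-2r}x_{[i]}^{2(p-1)}=\sum_i \big(i^{-2r}x_{[i]}\big)^{2(p-1)}\big(\text{weight}_i\big)^{2(p-1)}\cdot(\text{weight}_i)^{-2(p-1)}$ and apply Hölder with exponents $1/(2(p-1))$ and $1/(3-2p)$ (note $2(p-1)+(3-2p)=1$ and both exponents are in $(0,1]^{-1}$ since $1\le p<3/2$); the first factor becomes $|x|_\sharp^{2(p-1)}$ once the weight is chosen to match the definition of $|\cdot|_\sharp$, and the second factor, a sum of the reciprocal weights, is a deterministic sum estimated by Lemmas \ref{Lo basic sum bound} and \ref{Lo basic power int}. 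The restriction of the sums to $1\le i\le n/e$ (using the stated inequality $\sum_1^n\le \frac{2n}{b}\sum_1^b$ with $b=n/e$) both fixes the loss from truncation and, crucially, makes the coefficient $\beta_i^{-(3-2p)/(2(p-1))}i^{-r/(p-1)}$ (resp.\ $i^{-2r}(\ln\frac ni+\frac{t^2}i)^{-(3-2p)/2}$) non-increasing in $i$, so that $|\cdot|_\sharp$ is a genuine norm; verifying this monotonicity is a calculus check on the explicit coefficient. Case IVb is the degenerate sub-case where the weights collapse and $|\cdot|_\sharp$ is just the Euclidean norm, handled by the same Hölder step with the trivial bound $i^{-2r}\le 1$ on $[1,n/e]$ after truncation, or even more directly.

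I expect the main obstacle to be the bookkeeping in Case IVa (and the IVa/IVb split along $(1-2r)\ln n \gtrless e$): one must choose the weight $\beta_i$ so that \emph{simultaneously} (a) the reciprocal-weight sum $\sum_i \beta_i^{1/?}\cdots$ telescopes into the clean bound $C(\ln n)^{3-2p}$, (b) $\mathbb M\,|X|_\sharp$ and $\mathrm{Lip}(|\cdot|_\sharp)$ combine to give the stated $S$, and (c) the coefficient is monotone. The form $\beta_i=\frac{(1-2r)^p\ln n}{n^{1-2r}}i^{-2r}(\ln\frac ni)^{p-1}+i^{-1}$ is engineered precisely so that the two summands balance the two regimes of Lemma \ref{Lo basic sum bound} at $a=1$ versus $a\ne1$ (the $i^{-1}$ term captures the boundary behaviour when $p=2(1-r)$ forces the exponent to the critical value), and checking that all three requirements hold with this choice — in particular tracking the $(1-2r)$ powers through the concentration step without losing the sharp $(1-2r)^{-1}$ in $R$ — is where the delicate estimation lies. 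Everything else reduces to the deterministic lemmas of Section \ref{Lo estimating} plus one application of Gaussian concentration.
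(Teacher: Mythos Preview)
Your strategy is correct and matches the paper's proof in Cases I, III and IV; in particular the H\"older split with exponents $\frac{1}{2(p-1)}$ and $\frac{1}{3-2p}$ is exactly what the paper uses (though note that your displayed decomposition drops a factor $i^{-2r(3-2p)}$ in the second term --- the identity should read $i^{-2r}x_{[i]}^{2(p-1)}=\big(i^{-2r}x_{[i]}/w_i\big)^{2(p-1)}\cdot\big(w_i^{2(p-1)/(3-2p)}i^{-2r}\big)^{3-2p}$).

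The one substantive point to flag is Case II: the paper does \emph{not} run Gaussian concentration on $|\cdot|_\sharp$ there. Instead, the weights $w_i=(\ln\frac{n}{i}+\frac{t^2}{i})^{(3-2p)/2}$ are chosen so that substituting the order-statistic bound $X_{[i]}\le C(\ln\frac{n}{i}+\frac{t^2}{i})^{1/2}$ --- which is itself the high-probability event of Lemma~\ref{normalorderstats}, already carrying probability $\ge 1-C\exp(-t^2)$ --- directly into the definition of $|X|_\sharp$ yields exactly $S$. This is why $R=S$ in Case II, and why the argument goes through even when $p<3/2-2r$ and $|\cdot|_\sharp$ is not a norm. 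Your template ``median $+$ $t\cdot\mathrm{Lip}$'' would give a different (and messier) $S$.

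For Case IVa, the paper first motivates the form of $\beta_i$ via Lagrange multipliers (which you may skip, since $\beta_i$ is handed to you) and then carries out the estimates of $\mathbb M|X|_\sharp$ and $\mathrm{Lip}(|\cdot|_\sharp)$ by locating a crossover index $A_0\in[n^{1-3/(2e)},n/e^2]$ at which the two summands of $\beta_i$ balance, and splitting each sum at $A_0$; this is exactly the bookkeeping you anticipated.
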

\begin{proof}
\textbf{Case I:} $p\in \left[3/2, \infty \right)$. In this case $\left \vert \cdot \right \vert_{2r,2(p-1)}$ is a norm and it follows by classical Gaussian concentration that with probability at least $1-C\exp\left(-t^2/2\right)$,
\[
\sum_{i=1}^{n}i^{-2r}X_{[i]}^{2(p-1)}=\left \vert X \right \vert_{2r,2(p-1)}^{2(p-1)}\leq \left[ \mathbb{M}\left \vert X \right \vert_{2r,2(p-1)}+t \mathrm{Lip}\left \vert \cdot \right \vert_{2r,2(p-1)} \right]^{2(p-1)}
\]
Estimates for $\mathbb{M}\left \vert X \right \vert_{2r,2(p-1)}$ and $\mathrm{Lip}\left \vert \cdot \right \vert_{2r,2(p-1)}$ follow from Lemmas \ref{Lo med calculation} and \ref{Lo Lip con}, and we leave the computation to the reader (with the reminder that we are applying these results with $2r$ and $2(p-1)$ instead of $r$ and $p$). Certain numerical simplifications can be made based on the values of $p$ and $r$ and the existence of the factor $C^p$.

\noindent \textbf{Proof. Case II:} We consider $p\in (1,3/2)$ and reclaim the case $p=1$ by taking a limit. For all $x\in \mathbb{R}^n$, by H\"{o}lder's inequality, $\sum_{1}^{n}i^{-2r}x_{[i]}^{2(p-1)}$ is bounded above by
\begin{eqnarray*}
&&C\sum_{i=1}^{n/e}\frac{i^{-4r\left( p-1\right) }x_{\left[ i\right]
}^{2(p-1)}}{\left( \ln \left( n/i\right) +t^{2}/i\right) ^{\left( p-1\right)
\left( 3-2p\right) }}i^{-2r\left( 3-2p\right) }\left( \ln \left( n/i\right)
+t^{2}/i\right) ^{\left( p-1\right) \left( 3-2p\right) } \\
&\leq&C\left( \sum_{i=1}^{n/e}\frac{i^{-2r}x_{\left[ i\right] }}{\left( \ln
\left( n/i\right) +t^{2}/i\right) ^{\left( 3-2p\right) /2}}\right) ^{2\left(
p-1\right) }\left( \sum_{i=1}^{n/e}i^{-2r}\left( \ln \frac{n}{i}+\frac{t^{2}%
}{i}\right) ^{p-1}\right) ^{3-2p}\\
&= &C\left\vert x \right\vert_{\sharp}^{2(p-1)}\left( \sum_{i=1}^{n/e}i^{-2r}\left( \ln \frac{n}{i}+\frac{t^{2}%
}{i}\right) ^{p-1}\right) ^{3-2p}
\end{eqnarray*}%
Eq. (\ref{orderstat estimate}) from Lemma \ref{normalorderstats} then implies that with high probability
\[
\left\vert X \right\vert_{\sharp}=\sum_{i=1}^{n/e}\frac{i^{-2r}X_{\left[ i\right] }}{\left( \ln\left( n/i\right) +t^{2}/i\right) ^{\left( 3-2p\right) /2}}
\leq C'\sum_{i=1}^{n/e}i^{-2r}\left( \ln \frac{n}{i}+\frac{t^{2}}{i}\right) ^{p-1}=\frac{S}{C''}
\]
Assuming this event occurs, the above calculation involving H\"{o}lder's inequality implies that $\sum_{1}^{n}i^{-2r}X_{[i]}^{2(p-1)}$ is bounded above by $S$. Lastly,
\begin{eqnarray*}
S\leq C\sum_{i=1}^{n/e}i^{-2r}\left( \ln \frac{n}{i}\right)
^{p-1}+Ct^{2(p-1)}\sum_{i=1}^{n/e}i^{-2r-p+1}
\end{eqnarray*}
which is bounded above using Lemmas \ref{Lo basic sum bound} and \ref{Lo basic power int}.

\noindent \textbf{Proof. Case III:} $p<3/2-2r$ (so necessarily $0\leq r <1/4$ and $1\leq p <3/2$). For $p\neq 1$, by H\"{o}lder's inequality,
\[
\sum_{i=1}^{n}i^{-2r}x_{[i]}^{2(p-1)}\leq \left(\sum_{i=1}^{n}i^{-2r}x_{[i]}\right)^{2(p-1)}\left(\sum_{i=1}^{n}i^{-2r}\right)^{3-2p}\leq Cn^{(1-2r)(3-2p)}\left\vert x \right\vert_{\sharp}^{2(p-1)}
\]
and for $p=1$ the same bound is seen to hold. An upper bound on the quantiles of $\left\vert X \right\vert_{\sharp}$ follows from Gaussian concentration (making use of Lemmas \ref{Lo med calculation} and \ref{Lo Lip con}).

\noindent \textbf{Proof. Case IV:} $p=2(1-r)$ and $r\in \left(1/4,1/2\right]$, in which case $p\in \left[1,3/2\right)$. The sub-case $(1-2r)\ln n < e$ is clear enough by H\"{o}lder's inequality for $p\neq 1$,
\begin{eqnarray*}
\sum_{i=1}^{n}i^{-2r}x_{[i]}^{2(p-1)} &\leq& \left( \sum_{i=1}^{n}\left(i^{-2r}\right)^{\frac{1}{2-p}}\right)^{2-p}\left( \sum_{i=1}^{n}\left(x_{[i]}^{2(p-1)}\right)^{\frac{1}{p-1}}\right)^{p-1}
\end{eqnarray*}
and then noting that the exponent $-2r/(2-p)=-1$ and applying classical Gaussian concentration to $\left\vert \cdot \right\vert$. For $p=1$ there is nothing to show. In this sub-case,
\[
\frac{\left(\ln n \right)^{2-p}}{\ln n}=\exp \left( -(1-2r)\ln\ln n \right)\in \left[e^{-1},1\right] \hspace{1.5cm} n^{1-2r}=\exp\left((1-2r)\ln n\right)\in \left[1,e^e\right)
\]
which is how we simplify the exponents of $\ln n$.

The rest of the proof deals with the other sub-case $(1-2r)\ln n \geq e$. Throughout, we make use of the relation $p=2(1-r)$ which is not always explicitly re-stated, and the reader should make a mental note of this. The case $p=1$ is automatically excluded from this sub-case. By taking the constant $C$ in the probability bound to be at least $\sqrt{e}$ we may assume that $t\geq1$. For any sequence $\left(\alpha_i\right)_1^{\left \lfloor n/e\right \rfloor}$ with $\alpha_i>0$, by H\"{o}lder's inequality,
\begin{eqnarray*}
\sum_{i=1}^{n}i^{-2r}x_{[i]}^{2(p-1)} &\leq&C\left(\sum_{i=1}^{n/e}\alpha_i^{\frac{1}{3-2p}}\right)^{3-2p}\left( \sum_{i=1}^{n/e}i^{\frac{-r}{p-1}}\alpha_i^{\frac{-1}{2(p-1)}}x_{[i]} \right)^{2(p-1)}\\
&=& C\left(\sum_{i=1}^{n/e}\beta_{i}\right)^{3-2p}\left( \sum_{i=1}^{n/e}i^{\frac{-r}{p-1}}\beta_{i}^{\frac{-(3-2p)}{2(p-1)}}x_{[i]} \right)^{2(p-1)}
\end{eqnarray*}
where $\beta_i=\alpha_i^{\frac{1}{3-2p}}$. This vector $\beta\in\mathbb{R}^{\left \lfloor n/e\right \rfloor}$ is considered a variable for now, and its value will later be fixed to match the value quoted in the statement of the result. Summing only up to $n/e$ will ensure that $i^{\frac{2r}{3-2p}}\beta_i$ is non-decreasing in $i$, which then implies that
\begin{equation}
\left\vert x\right\vert_{\sharp}=\sum_{i=1}^{n/e}i^{\frac{-r}{p-1}}\beta_{i}^{\frac{-(3-2p)}{2(p-1)}}x_{[i]} \label{normee}
\end{equation}
is a norm. By classical Gaussian concentration applied to $\left\vert \cdot \right\vert_{\sharp}$, with probability at least $1-C\exp\left(-t^2/2\right)$,
\begin{equation}
\sum_{i=1}^{n}i^{-2r}X_{[i]}^{2(p-1)}\leq \left(\sum_{i=1}^{n/e}\beta_{i}\right)^{3-2p} \left[\mathbb{M} \left\vert X \right\vert_{\sharp} +t \mathrm{Lip}\left( \left\vert \cdot \right\vert_{\sharp}\right) \right]^{2(p-1)} \label{essential bound}
\end{equation}
The median can be estimated using (\ref{orderstat estimate}) and the Lipschitz constant computed as the Euclidean norm of the gradient, which gives
\begin{eqnarray*}
\mathbb{M} \left\vert X \right\vert_{\sharp} &\leq& C\sum_{i=1}^{n/e} i^{\frac{-r}{p-1}} \beta_i^{\frac{-(3-2p)}{2(p-1)}}\left(\ln \frac{n}{i}\right)^{1/2}\\
\mathrm{Lip}\left( \left\vert \cdot \right\vert_{\sharp}\right) &=&\left(\sum_{i=1}^{n/e} i^{\frac{-2r}{p-1}} \beta_i^{\frac{-(3-2p)}{p-1}} \right)^{1/2}
\end{eqnarray*}
We temporarily assume that $\sum\beta_{i}=1$, which we may do by homogeneity, although this condition will later be relaxed. We wish to minimize the function
\[
\psi\left( \beta \right)=\sum_{i=1}^{n/e} i^{\frac{-r}{p-1}} \beta_i^{\frac{-(3-2p)}{2(p-1)}}\left(\ln \frac{n}{i}\right)^{1/2}+t\left(\sum_{i=1}^{n/e} i^{\frac{-2r}{p-1}} \beta_i^{\frac{-(3-2p)}{p-1}} \right)^{1/2}
\]
over the collection of all $\beta\in \mathbb{R}^{\left\lfloor n/e \right\rfloor}$ such that $i^{\frac{2r}{3-2p}}\beta_i$ is positive and non-decreasing in $i$ and such that $\sum\beta_{i}=1$. The method of Lagrange multipliers leads us to solve the equations
\[
\frac{\partial \psi\left(\beta\right)}{\partial \beta_i}=-\lambda
\]
which can be written as
\[
B_1i^{\frac{-r}{p-1}}\left( \ln \frac{n}{i}\right)^{1/2}\beta_i^{\frac{-1}{2(p-1)}}+B_2i^{\frac{-2r}{p-1}}\beta_i^{\frac{-(2-p)}{p-1}}=1
\]
where $B_1$ and $B_2$ are positive values that do not depend on $i$. This implies that
\[
1/2 \leq \max \left\{B_1i^{\frac{-r}{p-1}}\left( \ln \frac{n}{i}\right)^{1/2}\beta_i^{\frac{-1}{2(p-1)}},B_2i^{\frac{-2r}{p-1}}\beta_i^{\frac{-(2-p)}{p-1}}\right\}\leq 1
\]
and therefore
\[
\beta_i\leq \max \left\{2^{2(p-1)}B_1^{2(p-1)}i^{-2r}\left( \ln \frac{n}{i}\right)^{p-1},2^{\frac{p-1}{2-p}}B_2^{\frac{p-1}{2-p}}i^{\frac{-2r}{2-p}} \right\}
\]
with the reverse inequality holding when $2^{2(p-1)}$ and $2^{\frac{p-1}{2-p}}$ are deleted. At this point, and by homogeneity, we remove the condition $\sum \beta_i=1$ and are led to the definition
\[
\beta_i=Ai^{-2r}\left( \ln \frac{n}{i}\right)^{p-1}+i^{-1}
\]
for some $A>0$. $B_1$, $B_2$ and the powers of $2$ disappear since they do not depend on $i$ and we have re-scaled $\beta$, and we have used the equation $p=2(1-r)$ to simplify the exponent $-2r/(2-p)$. We now minimize over $A$. It follows from Lemma \ref{Lo basic sum bound} that
\begin{eqnarray*}
\sum_{i=1}^{n/e}\beta_i &\leq& CA(1-2r)^{-p}n^{1-2r}+C\ln n
\end{eqnarray*}
With an eye on (\ref{essential bound}), it is clear that the bounds for $\mathbb{M} \left\vert X \right\vert_{\sharp}$ and $\mathrm{Lip}\left( \left\vert \cdot \right\vert_{\sharp}\right)$ are decreasing in $A$. It therefore does not help to let $A$ slip below the point where
\[
CA(1-2r)^{-p}n^{1-2r}=C\ln n
\]
because as $A$ continues to decrease beyond this point $\sum \beta_i$ stays the same order of magnitude while $\mathbb{M} \left\vert X \right\vert_{\sharp}+t\mathrm{Lip}\left( \left\vert \cdot \right\vert_{\sharp}\right)$ increases. We may therefore assume that
\[
A\geq \frac{c(1-2r)^{p}\ln n}{n^{1-2r}} \hspace{3cm} \sum_{i=1}^{n/e}\beta_i \leq CA(1-2r)^{-p}n^{1-2r}
\]
If we look back at (\ref{essential bound}) with our new bound for $\sum \beta_i$ and our definition of $\beta_i$, and we take $A$ out of the expression for $\sum \beta_i$ and move it into the powers of $\beta_i$ with corresponding exponents $\frac{-(3-2p)}{2(p-1)}$ and $\frac{-(3-2p)}{p-1}$ in the expression $\mathbb{M} \left\Vert X \right\Vert +t \mathrm{Lip}\left( \left\Vert \cdot \right\Vert\right)$, we see that these powers of $\beta_i$ become
\[
\left( i^{-2r}\left( \ln \frac{n}{i}\right)^{p-1}+A^{-1}i^{-1} \right)^{\frac{-(3-2p)}{2(p-1)}} \hspace{2cm} \left( i^{-2r}\left( \ln \frac{n}{i}\right)^{p-1}+A^{-1}i^{-1} \right)^{\frac{-(3-2p)}{p-1}}
\]
So, in our current range for $A$, the expression to be minimized (or at least the bound that we have for it) is increasing. This leads us to take
\begin{equation}
A= \frac{(1-2r)^{p}\ln n}{n^{1-2r}} \label{Adef}
\end{equation}
Recall that for $\left\vert \cdot \right\vert_{\sharp}$ to be a norm, see (\ref{normee}), it is sufficient for $i^{2r/(3-2p)}\beta_i$ to be non-decreasing in $i$, equivalently for $i^{-r/(p-1)}\beta_{i}^{-(3-2p)/(2p-2)}$ to be non-increasing. Writing 
\[
\omega_i=i^{\frac{-r}{p-1}}\beta_{i}^{\frac{-(3-2p)}{2(p-1)}}=\left[An^{\frac{4r(p-1)}{3-2p}}\left(\frac{n}{i} \left( \ln \frac{n}{i} \right)^{\frac{-(3-2p)}{4r}}\right)^{\frac{-4r(p-1)}{3-2p}}+i^{\frac{p-1}{3-2p}}\right]^{\frac{-(3-2p)}{2(p-1)}}
\]
and noting that $z\left( \ln z\right)^{-(3-2p)/(4r)}$ is increasing for $z\geq \exp \left((3-2p)/(4r)\right)$, we see that $\omega_i$ is decreasing. We now bound $\mathbb{M} \left\vert X \right\vert_{\sharp}$ and $\mathrm{Lip}\left( \left\vert \cdot \right\vert_{\sharp}\right)$. From the definition of $\beta_i$,
\begin{equation}
\beta_i^{-1} \leq \min\left\{A^{-1}i^{2r}\left(\ln \frac{n}{i}  \right)^{-(p-1)}, i\right\} \label{beta inv}
\end{equation}
which leads us to solve,
\begin{eqnarray*}
A^{-1}i^{2r}\left(\ln \frac{n}{i}  \right)^{-(p-1)}=i
\end{eqnarray*}
Keeping in mind that $1-2r=p-1$, the above equation holds precisely when
\begin{equation}
\frac{n}{i} \left(\ln \frac{n}{i}\right)^{-1}=A^{\frac{1}{p-1}}n \label{eoi}
\end{equation}
The function $z\mapsto z/\ln z$ is increasing on $[e,\infty)$ and we will show that provided $n>n_0$ (for a universal constant $n_0>1$),
\begin{equation}
\frac{1}{2}e^{2}\leq A^{\frac{1}{p-1}}n \leq \frac{2e}{3\ln n}n^{\frac{3}{2e}} \label{rangee}
\end{equation}
so that (\ref{eoi}) has exactly one solution for $i\in [n^{1-\frac{3}{2e}},n/e^2]$, denoted $A_0$ (not necessarily an integer) which satisfies
\begin{eqnarray}
n^{1-\frac{3}{2e}}&\leq& A_0\leq e^{-2}n\\
A_0 \ln \frac{n}{A_0}&=&A^{\frac{-1}{p-1}} \label{basic AA0}\\
\ln \left(A^{1/(p-1)}n \right)&=& \ln \frac{n}{A_0} - \ln \ln \frac{n}{A_0}\\
\ln \left(A^{1/(p-1)}n \right) &\leq& \ln \left( \frac{n}{A_0}\right) \leq \left( 1-\frac{1}{e}\right)^{-1} \ln \left(A^{1/(p-1)}n \right) \label{logsall}
\end{eqnarray}
The assumption $n>n_0$ does not limit our generality since the result is directly seen to hold when $n\leq n_0$ in which case many of the coefficients involved are bounded by constants. From (\ref{logsall}), (\ref{Adef}) and the defining inequality of the current sub-case, it follows that
\[
\frac{1}{2}\ln \left((1-2r)\ln n \right) \leq (1-2r)\ln \left( \frac{n}{A_0}\right) \leq \left( 1-\frac{1}{e}\right)^{-1} \ln \left((1-2r)\ln n \right)
\]
For the left inequality we used the fact that $p-1\in (0,1/2)$ and $z\ln z \geq -1/e$ for $z\in (0,1/2)$. The right inequality is more straightforward. We now verify (\ref{rangee}). Recalling (\ref{Adef}) and the fact that $1-2r=p-1$, which we are using constantly, the lower bound in (\ref{rangee}) holds provided
\[
\ln n \geq \frac{e^{(2-\ln 2)(p-1)}}{(p-1)^p}
\]
From the definition of the current sub-case, $\ln n \geq e/(p-1)$, so a sufficient condition for the above inequality to hold is
\[
2-\ln2 \leq \frac{1}{p-1}+\ln (p-1)
\]
which is true by considering $1/z+\ln z$ for $z\in \left( 0,1/2 \right)$. The upper bound in (\ref{rangee}) holds provided
\[
(p-1)\ln n \leq \exp\left( \frac{p-1}{p} \ln \left(\frac{2e}{3}n^{\frac{3}{2e}}\right)\right)
\]
which holds by applying $e^z\geq ez$. This completes the task of verifying (\ref{rangee}). From (\ref{beta inv}) it follows that
\begin{equation}
\beta_i^{-1}\leq
\left\{ 
\begin{array}{ccc}
Ci & : & i\leq A_0 \\ 
CA^{-1}i^{2r}\left(\ln \frac{n}{i}\right)^{-(p-1)} & : & i> A_0%
\end{array}%
\right. \label{piecewise beta}
\end{equation}
In an integral where the integrand grows or decays at a controlled rate, one can change an upper bound of $A_0+1$ to $A_0$ at the expense of a constant. Using
\[
 \int_{a}^{b}e^{-\omega}\omega^{1/2}d\omega \leq \frac{C(b-a)e^{-a}a^{1/2}}{1+b-a}
\]
valid as long as $1\leq a \leq b$, and
\[
\int_{0}^{b}e^{-\omega}\omega^{p-1}d\omega \leq C
\]
which gives the correct order of magnitude for (say) $b\geq 1/2$, $\mathbb{M}\left\vert X \right\vert_{\sharp}$ is bounded above by
\begin{eqnarray*}
&&C^{\frac{1}{p-1}}\sum_{i=1}^{A_0}i^{-\frac{1}{2}}\left( \ln \frac{n}{i} \right)^{\frac{1}{2}}+C^{\frac{1}{p-1}}A^{\frac{-(3-2p)}{2(p-1)}}\sum_{i=A_0}^{n/e}i^{-2r}\left( \ln \frac{n}{i} \right)^{p-1}\\
&\leq& C^{\frac{1}{p-1}}n^{-\frac{1}{2}}\int_1^{A_0}\left( \frac{n}{x}\right)^{1/2}\left( \ln \frac{n}{x}\right)^{1/2}dx+C^{\frac{1}{p-1}}A^{\frac{-(3-2p)}{2(p-1)}}n^{-2r}\int_{A_0}^{n/e}\left( \frac{n}{x}\right)^{2r}\left( \ln \frac{n}{x}\right)^{p-1}dx\\
&\leq& C^{\frac{1}{p-1}}n^{\frac{1}{2}}\int_{\frac{1}{2}\ln \frac{n}{A_0}}^{\frac{1}{2}\ln n}e^{-\omega}\omega^{1/2}d\omega +C^{\frac{1}{p-1}}A^{\frac{-(3-2p)}{2(p-1)}}(1-2r)^{-p}n^{1-2r}\int_{1-2r}^{(1-2r)\ln \frac{n}{A_0}}e^{-\omega}\omega^{p-1}d\omega\\
&\leq& C^{\frac{1}{p-1}}A_0^{\frac{1}{2}}\left(\ln \frac{n}{A_0}\right)^{\frac{1}{2}}+C^{\frac{1}{p-1}}A^{\frac{-(3-2p)}{2(p-1)}}(1-2r)^{-p}n^{1-2r}\\
&\leq& C^{\frac{1}{p-1}}A^{\frac{-(3-2p)}{2(p-1)}}(1-2r)^{-p}n^{1-2r}
\end{eqnarray*}
We claim that
\begin{eqnarray*}
\int_0^be^{\omega}\omega^{-(3-2p)}d\omega \leq
\left\{ 
\begin{array}{ccc}
C(p-1)^{-1}b^{2(p-1)} & : & 0\leq b\leq 1 \\ 
C(p-1)^{-1}+Ce^bb^{-(3-2p)} & : & b \geq 1%
\end{array}%
\right.
\end{eqnarray*}
For $0\leq b \leq 1$ this is clear. For $b\geq3$ this follows because on $[2,\infty)$ the local exponential growth rate of the integrand is
\[
\frac{d}{d\omega}\left[\omega-(3-2p)\ln \omega\right]=1-\frac{3-2p}{\omega}\in\left[0.5,1\right]
\]
and for $1<b<3$ the bound follows by monotonicity in $b$. Using the claim just proved, $\textrm{Lip}\left(\left\vert \cdot \right\vert_{\sharp} \right)$ is bounded above by
\begin{eqnarray*}
&&\left[C^{\frac{1}{p-1}}\ln A_0+C^{\frac{1}{p-1}}A^{\frac{-(3-2p)}{p-1}}n^{-4r}\int_{A_0}^{n/e}\left( \frac{n}{x}\right)^{4r}\left( \ln \frac{n}{x}\right)^{-(3-2p)}dx \right]^{\frac{1}{2}}\\
&\leq&C^{\frac{1}{p-1}}\left(\ln A_0 \right)^{\frac{1}{2}}+C^{\frac{1}{p-1}}A^{\frac{-(3-2p)}{2(p-1)}}(4r-1)^{1-p}n^{\frac{1-4r}{2}}\left(\int_{4r-1}^{(4r-1)\ln \frac{n}{A_0}}e^{\omega}\omega^{-(3-2p)}d\omega \right)^{\frac{1}{2}}
\end{eqnarray*}
If $(4r-1)\ln \left(n/A_0\right)<1$ then this is bounded by
\[
C^{\frac{1}{p-1}}\left(\ln n \right)^{\frac{1}{2}}+C^{\frac{1}{p-1}}A^{\frac{-(3-2p)}{2(p-1)}}n^{\frac{1-4r}{2}}\left(\ln \frac{n}{A_0}\right)^{p-1}\leq C^{\frac{1}{p-1}}\left(\ln n \right)^{\frac{1}{2}}
\]
To see why the last inequality is true, note that the inequality
\[
\left(\ln n \right)^{\frac{1}{2}}\geq A^{\frac{-(3-2p)}{2(p-1)}}n^{\frac{1-4r}{2}}\left(\ln \frac{n}{A_0}\right)^{p-1}
\]
reduces to
\[
A_0^{\frac{3-2p}{2}}\left(\ln \frac{n}{A_0}\right)^{\frac{1}{2}}\leq n^{\frac{4r-1}{2}}\left(\ln n\right)^{\frac{1}{2}}
\]
which in turn follows since $1\leq A_0\leq n$ and $3-2p=4r-1$. If $(4r-1)\ln \left(n/A_0\right)\geq1$ then using $4r-1=3-2p$ and $A_0\ln \left(n/A_0\right)=A^{-1/(p-1)}$, we get the same bound, i.e.
\[
\textrm{Lip}\left(\left\vert \cdot \right\vert_{\sharp} \right)\leq C^{\frac{1}{p-1}}\left(\ln n \right)^{\frac{1}{2}}
\]
Going all the way back to (\ref{essential bound}), regardless of whether $(4r-1)\ln \left(n/A_0\right)$ lies in $[0,1)$ or $[1,\infty)$,
\[
\sum_{i=1}^ni^{-2r}X_{[i]}^{2(p-1)}\leq C (1-2r)^{-p}n^{1-2r}+Ct^{2(p-1)}\left( \ln n \right)^{2-p}
\]
Then note that
\[
\frac{(1-2r)^{-p}}{(1-2r)^{-1}}=\exp \left( -(p-1) \ln (p-1) \right) \in (c,1)
\]
\end{proof}

\section{Simplified bounds and sharpness}

The bounds in Theorem \ref{orderorderbound} simplify if we ignore the norm $\left\vert \cdot \right\vert _\sharp$ and focus on the distributional estimates for $\sum_{i=1}^{n}i^{-2r}X_{\left[ i\right]
}^{2(p-1)}$, allowing constants that depend on $r$ and $p$, and limiting our attention to $n>n_{r,p}$ and the extreme cases $t=C$ and $t>t_{n,r,p}$. We also focus on the case $0<p<3/2$ which is the most interesting case, and we limit our attention to $r\leq 2$ so that the estimates simplify further, although the constant $2$ is arbitrary. With these simplifications we see that the bounds in Theorem \ref{orderorderbound} are sharp in a sense that is made clear in Remark \ref{shar simpy} below. Note that this remark is about Theorem \ref{orderorderbound} as much as it is about the median and probability in question (bounds on the median were already given in Lemma \ref{Lo med calculation}).

The cases corresponding to the diagram below are labelled starting from ii and include $p=1$ because these cases (and others) are considered in \cite{FrL}.

\begin{center}
\begin{tikzpicture}
\fill[black!05!white] (0,4.5) -- (0.75,3) -- (1.5,3) -- (1.5,4.5) -- cycle;
\fill[black!08!white] (1.5,3) -- (3,3) -- (3,4.5) -- (1.5,4.5) -- cycle;
\fill[black!05!white] (3,3) -- (6,3) -- (6,4.5) -- (3,4.5) -- cycle;
\fill[black!10!white] (0,3) -- (0.75,3) -- (0,4.5) -- cycle;
\draw[thick,->] (0,3) -- (6.5,3) node[anchor=north west] {$r$};
\draw[thick,->] (0,3) -- (0,5) node[anchor=south east] {$p$};
\draw (0,3.035) -- (0,2.965) node[anchor=north] {$0$};
\draw (0.75,3.035) -- (0.75,2.965) node[anchor=north] {$\frac{1}{4}$};
\draw (1.5 cm,3.035) -- (1.5 cm,2.965) node[anchor=north] {$\frac{1}{2}$};
\draw (3 cm,3.035) -- (3 cm,2.965) node[anchor=north] {$1$};
\draw (6 cm,3.035) -- (6 cm,2.965) node[anchor=north] {$2$};
\draw (1pt,3 cm) -- (-1pt,3 cm) node[anchor=east] {$1$};
\draw (1pt,4.5 cm) -- (-1pt,4.5 cm) node[anchor=east] {$\frac{3}{2}$};

\draw[black!17.5!white,dashed] (0.75,3) -- (0.75,5);
\draw[dashed] (0,3) -- (6,3);
\draw[dashed] (0,4.5) -- (6,4.5);
\draw[dashed] (3,3) -- (3,5);
\draw[dashed] (6,3) -- (6,5);
\draw[dashed] (1.5,3) -- (1.5,5);
\draw[dashed] (0,4.5) -- (0.75,3);
\draw[thick] (0.75,4.5) -- (1.5,3);
\filldraw[black](0,1.64) circle (0pt) node[anchor=west]{Figure 1: Regions of interest};
\filldraw[black](6.25,5) circle (0pt) node[anchor=west]{Case iia: $1\leq p<\frac{3}{2}$, $\frac{3-2p}{4}\leq r \leq \frac{1}{2}$, $p\neq 2-2r$};
\filldraw[black](7.5,4.15) circle (0pt) node[anchor=west]{Case iib*: $1\leq p<\frac{3}{2}$, $\frac{1}{2}< r \leq 1$};
\filldraw[black](7.5,3.3) circle (0pt) node[anchor=west]{Case iib**: $1\leq p<\frac{3}{2}$, $1< r \leq 2$};
\filldraw[black](7.5,2.45) circle (0pt) node[anchor=west]{Case iii: $1\leq p<\frac{3}{2}$, $p< \frac{3}{2}-2r$};
\filldraw[black](7.5,1.6) circle (0pt) node[anchor=west]{Case iv: $1\leq p<\frac{3}{2}$, $p=2-2r$};
\end{tikzpicture}
\end{center}

\begin{remark}\label{shar simpy}
Setting $t=C$ in Theorem \ref{orderorderbound} and taking $1<p <3/2$, one gets the bound
\[
\mathbb{M}\sum_{i=1}^{n}i^{-2r}X_{\left[ i\right]}^{2(p-1)}\leq \left\{
\begin{array}{lll}
Cn^{1-2r}\left(\ln n\right)^p\left[1+(1-2r)\ln n\right]^{-p} &:& 0\leq r\leq 1/2\\
C\left(\ln n\right)^p\left[1+(2r-1)\ln n\right]^{-1} &:& 1/2\leq r\leq 2
\end{array}
\right.
\]
and the reverse inequality holds by replacing $C$ with $c$. For $1<p<3/2$, $0\leq r\leq 2$ and $t>t_{n,r,p}$ the bounds in Theorem \ref{orderorderbound} can be written as
\begin{eqnarray*}
&&\mathbb{P}\left\{\sum_{i=1}^{n}i^{-2r}X_{\left[ i\right]}^{2(p-1)}>t \right\}\\
&&\leq \left\{
\begin{array}{lll}
2\exp\left(-c_{r,p}\left[1+n^{(2-2r-p)/(p-1)}\right]^{-1}t^{1/(p-1)}\right)&:& p\neq 2-2r ,n>n_{r,p}\\
2\exp\left(-\left(\ln n\right)^{-(2-p)/(p-1)}(ct)^{1/(p-1)}\right) &:& p=2-2r
\end{array}
\right.
\end{eqnarray*}
and the reverse inequality holds by replacing $c$ and $c_{r,p}$ with $C$ and $C_{r,p}$.
\end{remark}
\begin{proof}[Explanation for Remark \ref{shar simpy}]
For the upper bound of the median in Case iia, we use the fact that $Cn^{1-2r}\left(\ln n\right)^p\left[1+(1-2r)\ln n\right]^{-p}$ is at least
\begin{eqnarray*}
\frac{C\max\{1,n^{1-2r}\}\ln n}{1+\ln \max\{1,n^{1-2r}\}}\geq\frac{C\max\{1,n^{2-2r-p}\}\ln n}{1+\ln \max\{1,n^{2-2r-p}\}}\geq\frac{C\left(1+n^{2-2r-p}\right)\ln n}{1+\left\vert 2-2r-p\right\vert \ln n}
\end{eqnarray*}
so the term involving $t^{2(p-1)}$ does not come into play. Similarly for Case iii,
\[
Cn^{2-2r-p}\left(\frac{\ln n}{1+(1-4r)\ln n}\right)^{p-1}=Cn^{1-2r}\left(\frac{\ln n}{n\left[1+(1-4r)\ln n\right]}\right)^{p-1}\leq Cn^{1-2r}
\]
and for Case iv we consider Case IVa of Theorem \ref{orderorderbound} noting that
\[
C(1-2r)^{-1}n^{1-2r}=\frac{C\exp\left((1-2r)\ln n\right)}{(1-2r)\ln n}\ln n\geq C(\ln n)^{2r}=C(\ln n)^{2-p}
\]
and in Case IVb of Theorem \ref{orderorderbound} we note that (as explained in the proof of that theorem), $\ln n$ and $\left(\ln n\right)^p$ have the same order of magnitude. In all cases, the inequality for the median can be reversed by Lemma \ref{Lo med calculation}. We now explain the lower estimates for the probability. For $t>t_{n,r,p}$
\begin{eqnarray}
\mathbb{P}\left\{b_{n, 2r,2(p-1)}\left\vert X \right\vert\geq (1.01)t\right\}&\leq&\mathbb{P}\left\{\left( \sum_{i=1}^{n}i^{-2r}X_{\left[ i\right]
}^{2(p-1)}\right) ^{1/[2(p-1)]}\geq t\right\}\label{loer}\\
&\leq&\mathbb{P}\left\{b_{n, 2r,2(p-1)}\left\vert X \right\vert\geq t\right\}\label{uppup}
\end{eqnarray}
where $b_{n,r,p}$ is the supremum of the quasi-norm $\left\vert \cdot \right\vert_{r,p}$ over $S^{n-1}$, as estimated in Lemma \ref{Lo Lip con} (here this is not a Lipschitz constant). The upper bound (\ref{uppup}), which holds for all $t>0$, is the bound on the Gaussian measure of a Lorentz ball using the inscribed Euclidean ball. The lower bound (\ref{loer}) follows from \cite[Proposition 9]{Fr} (one of the examples mentioned in the introduction). This estimate can then be written as
\begin{equation}
\mathbb{P}\left\{\left( \sum_{i=1}^{n}i^{-2r}X_{\left[ i\right]
}^{2(p-1)}\right) ^{1/[2(p-1)]}\geq t\right\}\leq 2\exp\left(-c\left(\frac{t}{b_{n, 2r,2(p-1)}}\right)^2\right)\label{gauss measure shape}
\end{equation}
with the reverse inequality by replacing $c$ with $C$.
\end{proof}

\end{document}